\title[Doubling minimizers]{Minimizing measures for the doubling condition}
\author[Benito-de la Cigoña]{Fernando Benito F. de la Cigo\~{n}a}
\address{Fernando Benito Fern\'andez de la Cigo\~{n}a\hfill\break\indent
Department of Mathematics\hfill\break\indent 151 Thayer Street\hfill\break\indent Providence, RI, 02912 USA}
\email{fbf\_delacigona@brown.edu}
\author[Conde-Alonso]{Jos\'{e} M. Conde Alonso}
\address{Jos\'{e} M. Conde Alonso \hfill\break\indent 
 Departamento de Matem\'aticas \hfill\break\indent 
 Universidad Aut\'onoma de Madrid \hfill\break\indent 
 C/ Francisco Tom\'as y Valiente sn\hfill\break\indent 
 28049 Madrid, Spain}
\email{jose.conde@uam.es}
\author[Tradacete]{Pedro Tradacete}
\address{Pedro Tradacete \hfill\break\indent 
 Instituto de Ciencias Matem\'aticas \hfill\break\indent 
 Consejo Superior de Investigaciones Cient\'ificas \hfill\break\indent 
 C/ Nicol\'as Cabrera 13-15 \hfill\break\indent 
 28049 Madrid, Spain}
\email{pedro.tradacete@icmat.es}
\thanks{J. M. Conde-Alonso was partially supported by grants \texttt{CNS2022-135431} and \texttt{RYC2019-027910-I} (Ministerio de Ciencia, Spain). P. Tradacete was partially supported by grants \texttt{PID2020-116398GB-I00} and \texttt{CEX2023-001347-S} funded by  MCIN/AEI/10.13039/501100011033. 
}
\newtheorem{theorem}{Theorem}[section]
\newtheorem{lemma}[theorem]{Lemma}
\theoremstyle{definition}
\newtheorem{definition}[theorem]{Definition}
\newtheorem{example}[theorem]{Example}
\newtheorem{lem}[theorem]{Lemma}
\newtheorem{prop}[theorem]{Proposition}
\newtheorem{cor}[theorem]{Corollary}
\newtheorem{open}[theorem]{Open Question}
\numberwithin{equation}{section}
\numberwithin{figure}{section}
\numberwithin{table}{section}
\newtheorem{ltheorem}{Theorem}
\theoremstyle{remark}
\newtheorem{remark}[theorem]{Remark}
\begin{document}



\newcommand{\N}{\mathbb{N}}
\newcommand{\Z}{\mathbb{Z}}
\newcommand{\R}{\mathbb{R}}
\newcommand{\C}{\mathbb{C}}

\newcommand{\X}{\mathcal{X}}

\newcommand{\D}{\mathscr{D}}

\newcommand{\DM}{\mathrm{DM}}

\subjclass[2020]{28A75,31C05,30L99}

\keywords{Doubling measure; Doubling minimizer; superharmonic function; shortest path distance}

\maketitle

\begin{abstract}
We study those measures whose doubling constant is the least possible among doubling measures on a given metric space. It is shown that such measures exist on every metric space supporting at least one doubling measure. In addition, a connection between minimizers for the doubling constant and superharmonic functions is exhibited. This allows us to show that for the particular case of the euclidean space $\mathbb R^d$, Lebesgue measure is the only minimizer for the doubling constant (up to constant multiples) precisely when $d=1$ or $d=2$, while for $d\geq3$ there are infinitely many independent minimizers. Analogously, in the discrete setting, we can show uniqueness of the counting measure as a minimizer for regular graphs where the standard random walk is a recurrent Markov chain. The counting measure is also shown to be a minimizer in every infinite graph where the cardinality of balls depends solely on their radii.
\end{abstract}

\section*{Introduction}

Given a metric space $(\X,d)$, a Borel regular measure $\mu$ on $\X$ is said to be doubling if there exists a constant $C>0$ such that for all $x\in \X$ and $r>0$
\begin{equation*}
\mu(B(x,2r)) \leq C \mu(B(x,r))
\end{equation*}
and $0<\mu(B(x_0,r_0))<\infty$ for some $x_0\in \X$, $r_0>0$. If so, $0<\mu(B(x,r))<\infty$ for all $x\in \X$ and $r>0$.
Here and throughout the paper $B(x,r)=\{y\in \X:d(y,x)<r\}$. 

Metric spaces equipped with a doubling measure (also known in the literature as homogeneous spaces \cite{CW1971}) are central in analysis and geometry because they provide a controlled notion of ``volume growth'' that resembles Euclidean space. The doubling condition is crucial for extending classical tools, such as covering theorems, maximal functions, Poincaré inequalities and differentiation theory, to more general geometrical contexts (see the monographs \cite{BB2011, Heinonen} for these and further developments). This makes them a natural setting for analysis on fractals, manifolds, and metric spaces arising in a wide range of topics, including for instance geometric group theory or computer science.

Given a doubling measure $\mu$, let $C_\mu$ denote its doubling constant, which is the best possible $C$ in the display above. In other words, $$C_\mu=\sup_{x,r}\frac{\mu(B(x,2r))}{\mu(B(x,r))}.$$ Now, if $\X$ admits a doubling measure, we can also denote the \emph{least doubling constant} of $\X$ by
$$
C_\X = \inf_{\mu} C_\mu,
$$
where the infimum ranges over all doubling measures on $\X$, and we say that $C_\X=\infty$ if $\X$ does not support any doubling measure. 

Although a relatively natural quantity, the research on $C_{\X}$ started only recently in \cite{ST2019}, where it was shown that $C_{\X}\geq2$ whenever $\X$ does not reduce to a singleton. The precise value of $C_\X$ is related to metric dimensions and can be relevant for instance for controlling the norm of extension operators on spaces of Lipschitz functions (see \cite{BB2006}), among other applications.

The least doubling constant of a metric space and the set of measures minimizing this constant can reveal relevant information about the space. It is easy to observe that, for $\mathbb R^d$ with the distance induced by any $p$-norm ($1\leq p\leq \infty$), one has $C_{(\mathbb R^d,\|\cdot\|_p)}=2^d$ (see \cite[Proposition 5.1]{ST2019}), and this value is achieved in particular with the Lebesgue measure. The question that motivates our research in this paper is whether Lebesgue measure is the only minimizer, up to scalar multiples.

In the discrete setting, a connection between the least doubling constant of a graph (equipped with the shortest path distance) and the spectral radius of its (possibly infinite) adjacency matrix has been found in \cite{DST2023}. To be more precise, for a simple connected locally finite graph $G$ and a measure $\mu$ on $G$ one can consider the local doubling constant $$C_\mu^0=\sup_{x \in G}\frac{\mu(B(x,2))}{\mu(B(x,1))},$$ and the least local doubling constant $C_G^0=\inf_\mu C_\mu^0$. For every graph, $C_G^0$ coincides with $1+r(A_G)$, where $r(A_G)$ is the spectral radius of the adjacency matrix of $G$; moreover, in the case of finite graphs, where $r(A_G)$ coincides with the largest eigenvalue of $A_G$, the corresponding eigenvector (also known as Perron eigenvector) induces the unique measure minimizing $C_G^0$. In general, $C_G^0\leq C_G$ with equality under certain conditions. In particular, it can be shown that certain families of graphs, including all graphs with diameter 2, can be shown to have a unique minimizer up to multiplicative constants. For the particular case of linear graphs with $n$ vertices, this is the case as long as $n\leq 8$, while for $n\geq 9$ the situation is more elusive. In the infinite linear graphs $\mathbb N$ and $\Z$ the least doubling constant is 3, and although the counting measure is the unique minimizer for $\Z$, this is not the case for $\mathbb N$ (see \cite{DST2025} for details).

In this paper, our starting point will be a simple argument, based on Christ's dyadic systems of cubes, that shows minimizers for the least doubling constant exist on every metric space which supports a doubling measure. Recall that for complete metric spaces the existence of doubling measures is equivalent to the space being geometrically doubling \cite{LS1998} (see also \cite{VK1988}), in the sense that there is a constant $N\in\mathbb N$ such that for every $r>0$, every ball of radius $2r$ can be covered by at most $N$ balls of radius $r$.

Once minimizing measures for the doubling constant are always shown to exist, it is natural to wonder how large this set of minimizers can be and what structure it might have. For instance, are all minimizing measures absolutely continuous with respect to each other? It is pertinent to recall here that, although doubling measures can be supported on very small sets, there are certain limitations (see \cite[Proposition 2.5]{ST2019}, \cite[I.8.6]{Stein1993}, \cite{Wu1998}). In some cases, including the canonical example of $\R^d$, an argument from \cite{Jonsson1995} will allow us to show that minimizing measures are actually mutually absolutely continuous with respect to each other. This fact will be key in the following:

\begin{ltheorem}\label{teor:Rd}
For $(\mathbb R^d,\|\cdot\|_2)$, Lebesgue measure is the unique doubling minimizer (up to scalar multiples) as long as $d=1,2$, whereas for $d\geq3$ the set of doubling minimizers is an infinite-dimensional cone.
\end{ltheorem}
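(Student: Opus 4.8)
The plan is to reduce the uniqueness question to a statement about superharmonic functions via the promised connection (Jonsson-type argument giving mutual absolute continuity of minimizers), and then to exploit the fact that bounded superharmonic functions on $\mathbb{R}^d$ are constant precisely when $d\leq 2$ (recurrence of Brownian motion), whereas for $d\geq 3$ there is an infinite-dimensional cone of positive superharmonic functions bounded away from $0$ and $\infty$.

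\emph{Step 1: Any doubling minimizer on $\mathbb{R}^d$ has a density comparable to a constant.} By the remarks preceding the theorem, one first shows (via the argument from \cite{Jonsson1995}) that all doubling minimizers for $\mathbb{R}^d$ are mutually absolutely continuous; since Lebesgue measure $m$ is a minimizer (it realizes $C_{\mathbb{R}^d}=2^d$), every minimizer $\mu$ satisfies $d\mu = w\,dm$ for some weight $w$. The key rigidity input is that a minimizer cannot have any ``slack'': for $\mu$ to achieve the doubling constant $2^d$, the inequality $\mu(B(x,2r))\leq 2^d\mu(B(x,r))$ must essentially be saturated, and this forces $w$ to be (after normalization) bounded above and below by positive constants, and in fact to satisfy a mean-value / averaging identity. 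Concretely I expect that saturation of the doubling bound translates into: for all $x$ and $r$, $\fint_{B(x,2r)} w\,dm = \fint_{B(x,r)} w\,dm$, i.e., ball-averages of $w$ are independent of the radius; differentiating in $r$ this says $w$ is harmonic, or more precisely the one-sided nature of the inequality gives that $w$ is \emph{superharmonic} (or its reciprocal is, depending on orientation). This is where I'd expect to need the most care — extracting the exact functional equation on $w$ from the extremality of $C_\mu$, handling the supremum over all $(x,r)$ simultaneously, and justifying enough regularity on $w$ to speak of (super)harmonicity (e.g. via a weak/distributional formulation and Weyl's lemma).

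\emph{Step 2: Classify such weights.} Once $w$ is a positive (super)harmonic function on $\mathbb{R}^d$ with $0<\inf w\leq \sup w<\infty$ (the two-sided bound coming from Step 1), invoke the dimension dichotomy: for $d=1,2$, a bounded (super)harmonic function on all of $\mathbb{R}^d$ is constant, so $\mu$ is a scalar multiple of $m$, giving uniqueness. For $d\geq 3$ one exhibits an infinite-dimensional family: e.g. Newtonian potentials $w_\nu(x) = a + \int |x-y|^{2-d}\,d\nu(y)$ with $\nu$ a compactly supported positive measure of small total mass and $a$ large, which are superharmonic, strictly positive, and bounded with bounded reciprocal; varying $\nu$ over a cone gives an infinite-dimensional cone of weights. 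One must then check the converse direction — that each such $w$ \emph{does} yield a genuine doubling minimizer (doubling constant exactly $2^d$), which amounts to verifying $\sup_{x,r}\mu(B(x,2r))/\mu(B(x,r)) = 2^d$ using the two-sided bounds on $w$ together with the averaging/superharmonicity property; this gives $\leq 2^d$, and $\geq 2^d$ follows from $C_{\mathbb{R}^d}=2^d$ being the least doubling constant. Finally, one records that the set of all such $w$ is closed under addition and positive scaling, hence is a cone, and that it is infinite-dimensional because one can place point-mass-like potentials at infinitely many independent locations.

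\emph{Main obstacle.} The crux is Step 1: rigorously deriving the averaging identity (hence superharmonicity of the density) from the mere fact that $\mu$ minimizes the doubling constant, rather than from an a priori assumption. The subtlety is that minimality of $C_\mu$ is a $\sup$ over all balls, so one has to argue that if $w$ failed the averaging property at some scale/location, one could perturb $\mu$ to strictly decrease the supremum — a variational argument on a non-smooth (sup-type) functional — or alternatively use the mutual absolute continuity from \cite{Jonsson1995} more cleverly to bootstrap two-sided density bounds first and then the functional equation. Getting the precise correspondence ``doubling minimizer $\leftrightarrow$ bounded superharmonic density'' to be an exact equivalence, in both directions and with the correct orientation of the inequality, is the delicate part; everything after that is the classical Liouville theorem for harmonic functions versus the transience of $\mathbb{R}^d$ for $d\geq 3$.
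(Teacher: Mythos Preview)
Your high-level strategy---reduce to densities via absolute continuity, then invoke a Liouville-type dichotomy for positive superharmonic functions---matches the paper's. But two of your steps are off, and the ``main obstacle'' you identify is not one.

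The passage from $C_\mu=2^d$ to the averaging inequality is a one-line computation, not a variational argument: once $d\mu=w\,d\lambda$, the bound $\mu(B(x,2r))\leq 2^d\mu(B(x,r))$ reads $\int_{B(x,2r)}w\leq 2^d\int_{B(x,r)}w$, and dividing by $|B(x,2r)|=2^d|B(x,r)|$ gives $\fint_{B(x,2r)}w\leq\fint_{B(x,r)}w$ for every $x,r$. There is no saturation and no perturbation needed; the inequality is in general strict for non-Lebesgue minimizers. What \emph{is} delicate, and what the paper spends its effort on, is upgrading this dyadic-scale inequality to a pointwise mean-value inequality $w(x)\geq\fint_{B(x,r)}w$ for a suitable representative of $w$ (Lebesgue differentiation plus redefining on a null set), and then, for $d=2$, running a direct comparison with the fundamental solution $-\log\|x\|$ to force $w$ constant. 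No regularity beyond local integrability is assumed; Weyl's lemma does not enter.

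More seriously, the two-sided bound $0<\inf w\leq\sup w<\infty$ does \emph{not} follow from minimality, and the paper does not claim it. The explicit $d\geq 3$ minimizers in the paper have densities comparable to $\|x\|^{2-d}$ at infinity and hence tend to $0$; one can also take the bare fundamental solution as a density, which is unbounded above. Fortunately you do not need boundedness: the correct Liouville statement for $d\leq 2$ is that every \emph{positive} superharmonic function on $\mathbb{R}^d$ is constant (recurrence/parabolicity---you already cite this), with no upper bound required. Your Step~2 should invoke that version and drop the boundedness claim entirely.

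Your $d\geq 3$ construction via $w_\nu(x)=a+\int|x-y|^{2-d}\,d\nu(y)$ is correct and yields an infinite-dimensional cone of minimizers, just as the paper's truncated fundamental solutions do; the verification that $C_{w_\nu d\lambda}=2^d$ is exactly the superharmonic averaging inequality, as you say.
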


The proof of this result will be based on the fact that a minimizer on $\R^d$ must be a measure with density $f$ (with respect to Lebesgue measure) such that
$$
\frac{1}{|B(x,r)|}\int_{B(x,r)} f(y)dy \geq \frac{1}{|B(x,2r)|}\int_{B(x,2r)} f(y)dy. 
$$
In particular, $f$ must be superharmonic and positive. Since there are no nonconstant functions with that property unless $d\geq 3$, that will settle the question. 

We give alternative arguments when $d\in \{1,2\}$ that prove that the only minimizer is indeed the Lebesgue measure. These are based on discretizations of the minimizers and combine known result in $\Z^d$ with an application of Polya's random walk theorem. The procedure works for the norms $\|\cdot\|_1$ and $\|\cdot\|_\infty$ by the nature of the discretization argument, taking advantage of the fact that the corresponding balls have flat boundaries in both cases. In the discrete setting, we also show that the counting measure $\#_{\Z^d}$ is a minimizer in $\Z^d$ in every dimension, although it may not be the only one when $d\geq 3$. This also yields a procedure to compute the explicit value of the constant $C_{\Z^d}$ in any dimension. The result is a particular case of the following theorem. 
\begin{ltheorem}\label{teor:graphs}
    Let $\X$ be a infinite graph such that $|B(x,r)|=|B(y,r)|$ for all $x,y\in G$ and $r>0$. Then $\#_\X\in\mathrm{DM}(\X)$.
\end{ltheorem}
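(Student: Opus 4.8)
The plan is to prove that $C_{\#_\X}=C_\X$. Granting this, and assuming (as we may) that $\X$ supports at least one doubling measure — otherwise $C_\X=\infty$ and the assertion is degenerate — it follows that $\#_\X$ is itself doubling, since we shall see $C_{\#_\X}\le C_\mu<\infty$ for any doubling $\mu$; then $C_\X\le C_{\#_\X}$ trivially and hence $\#_\X\in\mathrm{DM}(\X)$. Because $|B(x,s)|$ does not depend on $x$ by hypothesis, write $V(s):=|B(x,s)|$, so that $C_{\#_\X}=\sup_{s>0}V(2s)/V(s)$. I would fix an arbitrary doubling measure $\mu$ with constant $C:=C_\mu$ and reduce everything to showing that $V(2r)/V(r)\le C$ for every fixed $r>0$; taking the supremum over $r$ and then the infimum over $\mu$ concludes.

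\emph{Averaging the doubling inequality over a large ball.} Fix a base vertex $o$ and a large integer $n$. Applying $\mu(B(x,2r))\le C\,\mu(B(x,r))$ at every $x\in B(o,n)$ and using the elementary mediant inequality $\max_i a_i/b_i\ge(\sum_i a_i)/(\sum_i b_i)$ (valid for $b_i>0$), one gets
\[
C\ \ge\ \frac{\sum_{x\in B(o,n)}\mu\big(B(x,2r)\big)}{\sum_{x\in B(o,n)}\mu\big(B(x,r)\big)}.
\]
Writing $\mu(B(x,s))=\sum_{y\in B(x,s)}\mu(\{y\})$ and swapping the order of summation, the numerator equals $\sum_y\mu(\{y\})\,|B(y,2r)\cap B(o,n)|$ and the denominator $\sum_y\mu(\{y\})\,|B(y,r)\cap B(o,n)|$. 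Now $|B(y,r)\cap B(o,n)|\le|B(y,r)|=V(r)$ always, and this intersection is empty unless $d(y,o)<n+r$; while $|B(y,2r)\cap B(o,n)|=|B(y,2r)|=V(2r)$ as soon as $d(y,o)\le n-2r$. With $m_1:=\lceil r\rceil$, $m_2:=\lceil 2r\rceil$ (fixed), these bounds give, for all large $n$,
\[
C\ \ge\ \frac{V(2r)}{V(r)}\cdot\frac{\mu\big(\{y:\,d(y,o)\le n-m_2\}\big)}{\mu\big(\{y:\,d(y,o)<n+m_1\}\big)}.
\]
The hypothesis enters exactly here: it decouples the volume factors $V(2r),V(r)$ from $\mu$, leaving only a ratio of masses of concentric balls whose radii differ by the fixed amount $m_1+m_2$.

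\emph{Making the annular ratio asymptotically trivial.} Put $b_n:=\mu(\{y:\,d(y,o)\le n\})$, which is finite (the graph is locally finite and $\mu$ charges points finitely) and nondecreasing, with $b_0>0$ since doubling measures charge points positively. The doubling condition at $o$ gives $b_{2n+1}\le C\,b_n$; iterating shows $b_n$ grows at most polynomially in $n$, so $\log b_n=O(\log n)$. Hence $\sum_{j=0}^{N-1}(\log b_{j+1}-\log b_j)=\log b_N-\log b_0=o(N)$, and since every summand is $\ge 0$, a pigeonhole argument over disjoint consecutive blocks of length $m_1+m_2$ produces arbitrarily large $n$ with $\log b_{n+m_1}-\log b_{n-m_2}<\varepsilon$, i.e. $\mu(\{d(\cdot,o)<n+m_1\})/\mu(\{d(\cdot,o)\le n-m_2\})<e^{\varepsilon}$. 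For such $n$ the last display yields $C\ge e^{-\varepsilon}\,V(2r)/V(r)$, and letting $\varepsilon\to 0$ gives $C\ge V(2r)/V(r)$.

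Taking $\sup_{r>0}$ now gives $C_\mu\ge C_{\#_\X}$ for every doubling $\mu$, hence $C_\X\ge C_{\#_\X}$; in particular $C_{\#_\X}<\infty$, so $\#_\X$ is a doubling measure and $C_\X\le C_{\#_\X}$ as well, whence $C_{\#_\X}=C_\X$ and $\#_\X\in\mathrm{DM}(\X)$. The double counting and the bounded shift of radii in the first step are routine; the step I expect to require genuine care is the second one, where the doubling property of $\mu$ must be invoked to cap the \emph{total} growth of $b_n$ and thereby guarantee the existence of scales at which $\mu$ is essentially constant across a fixed-width annulus. This is precisely the input that prevents a competitor measure from undercutting the ratio $V(2r)/V(r)$, which the counting measure attains at \emph{every} vertex and every scale simultaneously.
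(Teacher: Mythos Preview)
Your argument is correct and rests on the same core idea as the paper's: exploit the hypothesis $|B(x,s)|=V(s)$ via a double-counting (Fubini) identity over a large ball $B(o,n)$, then use the doubling of $\mu$ to control the boundary terms. The paper organizes the endgame by contradiction: assuming $C_\mu<C_{\#_\X}$, the double-counting yields $\mu(B(0,N-2n))<\gamma\,\mu(B(0,N+n))$ for some fixed $\gamma<1$, and iterating this $\lfloor M/3n\rfloor$ times produces $\mu(B(0,2M))\ge \gamma^{-\lfloor M/3n\rfloor}\mu(B(0,M))$, an exponential blow-up incompatible with $\mu$ being doubling. Your version is the contrapositive: you first extract from doubling that $\log b_n=O(\log n)$, then pigeonhole to find scales where the annular ratio $b_{n+m_1}/b_{n-m_2}$ is $e^{\varepsilon}$-close to $1$, and read off $C_\mu\ge V(2r)/V(r)$ directly. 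The paper's contradiction route is marginally slicker in that it avoids the explicit subsequence extraction, but your direct argument makes the role of polynomial volume growth more transparent; neither buys anything the other does not.
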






The rest of the paper is organized as follows: Section \ref{sec:sec1} deals with general metric spaces and contains the proof of existence of doubling minimizers, as well as several other basic facts. In Section \ref{sec:sec2}, we prove Theorem \ref{teor:graphs} and we discuss examples of discrete measure spaces for which we can identify the family of doubling minimizers via its local structure. This will include, in particular, the cases of $\mathbb{Z}$ and $\mathbb{Z}^2$. Finally, Section \ref{sec:sec3} contains the proof of Theorem \ref{teor:Rd} and some further results about doubling minimizers for different metrics on $\mathbb{R}^d$.

\section{General metric spaces}\label{sec:sec1}

Unless specified otherwise, we will assume the metric spaces we deal with support some doubling measure. For every metric space $(\X,d)$, the quantity $C_\X$ is defined as an infimum. We first show that, in fact, it is always attained. For the discrete case, this was shown in \cite[Proposition 2]{DST2025}. For the proof in the general case, we need to use the existence of a system of dyadic cubes on $\X$. This is guaranteed as long as $\X$ is geometrically doubling, as shown in \cite{Ch1990}. In turn, being geometrically doubling is a necessary condition for $C_\X<\infty$, and sufficient under the hypothesis of compactness or completeness (\cite{VK1988}, \cite{LS1998}). Our construction is borrowed from \cite{HK2012}. The result can be stated as follows:

\begin{theorem}\label{t:cubes}[Theorem 2.2 in \cite{HK2012}]
    Let $(\X,d)$ be a geometrically doubling metric space. For each $k \in \Z$, there exist a set of points $\{z_j^k\}_{j\in \N}$ and a partition $\D_k = \{Q_j^k\}_{j \in \N}$ of $\X$ such that the following properties hold:
    \begin{enumerate}
        \item If $\ell \geq k$, $Q_j^k \cap Q_{j'}^\ell \in \{\emptyset, Q_{j'}^\ell\}$.
        \item There exists a constant $C_0$ such that $B(z_j^k, 2^{-k}) \subseteq Q_j^k \subseteq B(z_j^k, 2^{-k}C_0)$ for all $j,k$.
    \end{enumerate}
\end{theorem}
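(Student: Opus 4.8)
The statement is the existence of Christ-type dyadic systems on a geometrically doubling space, and the plan is to carry out the construction of \cite{HK2012}. The first ingredient is a hierarchy of ``centers''. For each $k\in\Z$ I would choose a maximal $A\,2^{-k}$-separated subset $\{z_j^k\}_{j\in\N}\subseteq\X$, with $A$ a fixed constant taken large enough that the balls $B(z_j^k,2^{-k})$ are pairwise disjoint; geometric doubling forces any such separated set to meet every ball in finitely many points, hence to be countable, so the indexing by $j\in\N$ is legitimate. Maximality supplies the two facts everything rests on: the balls $\{B(z_j^k,A\,2^{-k})\}_j$ cover $\X$, and no point of $\X$ lies in two of the balls $B(z_j^k,2^{-k})$. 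I would then organize the centers into an infinite tree by attaching to each $z_j^k$ a parent $z_{\hat\jmath}^{k-1}$ with $d(z_j^k,z_{\hat\jmath}^{k-1})<A\,2^{-(k-1)}$ (available by the covering property of the coarser net), resolving ties with a fixed enumeration of $\bigcup_k\{z_j^k\}_j$. Composing parent maps gives, for all $\ell\ge k$, a ``scale-$k$ ancestor'' map on scale-$\ell$ centers, and summing a geometric series bounds the distance from any center to its scale-$k$ ancestor by $A\sum_{i\ge k}2^{-i}=2A\,2^{-k}$.

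The cubes are built from this tree. The naive recipe — let $Q_j^k$ be the Voronoi cell of $z_j^k$ among the scale-$k$ centers — does partition $\X$ at each fixed scale, but it is \emph{not} nested, because the scale-$\ell$ center closest to a point need not descend from the scale-$k$ center closest to it. The fix is to choose the partitions $\D_k$ \emph{coherently}: arrange that every $Q_{j'}^{\ell}$ with $\ell\ge k$ lies inside a single $Q_j^k$, so that $Q_j^k$ is exactly the set of those $x$ whose coherent descent chain of centers passes through $z_j^k$. Property (1) is then immediate from this nesting.

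For property (2), the lower inclusion $B(z_j^k,2^{-k})\subseteq Q_j^k$ should hold because a point $x$ within $2^{-k}$ of $z_j^k$ is strictly dominated by $z_j^k$ at scale $k$, and, since the tie-breaking is set up to defer to coarser scales, this dominance is inherited down the descent chain, so $x$ never leaves $Q_j^k$. The upper inclusion $Q_j^k\subseteq B(z_j^k,C_0\,2^{-k})$ follows by taking, for $x\in Q_j^k$, centers descending from $z_j^k$ that approach $x$ and summing along the chain the geometric series of consecutive inter-scale distances bounded above, which forces $d(x,z_j^k)<C_0\,2^{-k}$ with $C_0$ an absolute constant depending only on $A$ and the geometric doubling constant of $\X$.

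The whole difficulty lies in making the construction \emph{coherent}: one must produce, simultaneously for every $k\in\Z$, partitions that are honest disjoint covers, nested across \emph{all} scales at once, and obey (2) with one constant $C_0$ independent of $k$ and $j$. Since $\Z$ has no least element there is no coarsest scale from which to launch a refinement, so the cubes must be defined intrinsically from the tree, and one has to control their ``boundary'' finely enough that along the infinite refinement the inner ball $B(z_j^k,2^{-k})$ is never invaded and the outer radius $C_0\,2^{-k}$ never inflates. This balancing is precisely the technical core of \cite{HK2012}; I would reproduce their argument, tracking only how $A$ and the doubling constant enter $C_0$.
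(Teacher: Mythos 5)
Note first that the paper itself does not prove this statement: it is imported verbatim as Theorem 2.2 of \cite{HK2012}, so the only question is whether your sketch would stand as a proof on its own. It would not. You set up the right scaffolding — maximal $A2^{-k}$-separated nets at each scale (countable by geometric doubling plus the fact that $\X$ is a countable union of balls), a parent map between consecutive nets, and the geometric-series bound on the distance from a center to its scale-$k$ ancestor — but the two assertions that constitute the theorem, namely a \emph{coherent} family of partitions nested across all of $\Z$ (with no coarsest generation to launch a refinement from) and the inner/outer ball inclusions with a single constant $C_0$, are exactly the points you do not establish: ``arrange that every $Q_{j'}^\ell$ lies inside a single $Q_j^k$'', ``the tie-breaking is set up to defer to coarser scales'', and finally ``I would reproduce their argument''. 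A proof proposal that defers the technical core to the cited source is a restatement of the citation, not an argument.

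Moreover, your own constants show why the deferred step is not routine. The accumulated drift along a descent chain is, by your estimate, up to $2A\,2^{-k}$, which is \emph{larger} than the separation $A\,2^{-k}$ of the scale-$k$ net. Hence the naive scheme ``assign each fine center to its scale-$k$ ancestor and let $Q_j^k$ be the union of the corresponding fine cells'' does not yield $B(z_j^k,2^{-k})\subseteq Q_j^k$: descendants of a \emph{neighboring} scale-$k$ center can wander well inside that ball, so dominance at scale $k$ is not automatically ``inherited down the chain''. Making the inner inclusion survive requires a carefully designed assignment/partial order (and, in \cite{HK2012}, a smallness condition on the ratio of consecutive scales, after which one reindexes to obtain the dyadic form $2^{-k}$ with inner radius exactly $2^{-k}$ as stated here), together with a limiting or consistency argument to handle all $k\in\Z$ simultaneously. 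These are precisely the missing ingredients; as written, your text is a correct roadmap of the Christ/Hytönen--Kairema construction but contains a genuine gap where the theorem's content lies.
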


As usual, we denote
$$
\D = \bigcup_k \D_k.
$$
As a consequence of (2) in Theorem \ref{t:cubes}, every ball $B\subset \X$ is a disjoint union of elements from $\D$. In particular, a Borel regular measure is determined by its values on the elements from $\D$. In fact, by the properties of the system of dyadic cubes (see \cite[Lemma 3.5]{HK2012}) such a measure will be determined by its value on $\bigcup_{k\geq k_0}\D_k$ for any fixed $k_0\in\mathbb Z$. 

\begin{prop}
    Let $(\X,d)$ be a metric space such that $C_\X<\infty$. Then, there is a measure $\mu$ on $\X$ such that
    \begin{equation*}
        C_\mu=C_\X.
    \end{equation*}
\end{prop}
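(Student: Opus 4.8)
The plan is to realize $C_\X$ as the doubling constant of a subsequential limit of a minimizing sequence, using the dyadic system $\D$ from Theorem~\ref{t:cubes} to discretize the measures. Since $C_\X<\infty$, the space $\X$ is geometrically doubling, so Theorem~\ref{t:cubes} applies; fix an index $k_0\in\Z$ together with a cube $Q^0\in\D_{k_0}$, and pick doubling measures $\mu_n$ with $C_{\mu_n}\to C_\X$. Discarding finitely many terms we may assume $C_{\mu_n}\le C:=C_\X+1$ for all $n$, and rescaling each $\mu_n$ we may assume $\mu_n(Q^0)=1$.

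The first step is to obtain uniform bounds on the dyadic masses: for each $Q\in\D$ there are constants $0<a_Q\le b_Q<\infty$, depending only on $Q$, $Q^0$, $C$ and the structural constant $C_0$ of Theorem~\ref{t:cubes}, such that $a_Q\le\mu_n(Q)\le b_Q$ for all $n$. This uses property~(2) of Theorem~\ref{t:cubes}: both $Q$ and $Q^0$ contain, and are contained in, balls whose radii are comparable to the respective side lengths; enclosing these inside a common ball centered at the center of $Q^0$ (respectively of $Q$) and iterating $\mu_n(B(z,2\rho))\le C\,\mu_n(B(z,\rho))$ a bounded number of times, independent of $n$, converts the normalization $\mu_n(Q^0)=1$ into the stated two-sided estimate.

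Next, enumerating $\bigcup_{k\ge k_0}\D_k=\{Q_i\}_{i\ge1}$, the uniform bounds and a diagonal argument produce a subsequence $(\mu_{n_m})_m$ with $\mu_{n_m}(Q_i)\to c_i\in[a_{Q_i},b_{Q_i}]$ for every $i$. Since $\X$ is geometrically doubling, every dyadic cube splits into finitely many cubes of the next generation, so the numbers $c_i$ satisfy the refinement relations in $\D$ and, as recalled after Theorem~\ref{t:cubes} (see \cite{HK2012}), are the values on $\bigcup_{k\ge k_0}\D_k$ of a Borel regular measure $\mu_\infty$ on $\X$. By construction $\mu_\infty$ is finite and strictly positive on every dyadic cube, and $\mu_{n_m}(E)\to\mu_\infty(E)$ for every finite union $E$ of dyadic cubes.

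It remains to verify that $C_{\mu_\infty}\le C_\X$; since $\mu_\infty$ is then a doubling measure, hence a competitor in the infimum defining $C_\X$, the reverse inequality is automatic. Fix $x\in\X$ and $r>0$; for $k\ge k_0$ let $U_k$ be the union of the cubes of $\D_k$ contained in $B(x,2r)$ and $V_k$ the union of those meeting $B(x,r)$. Both are finite unions of dyadic cubes, $U_k\uparrow B(x,2r)$ as $k\to\infty$, and $\bigcap_k V_k\subseteq\overline{B(x,r)}$, because a point at distance greater than $r$ from $x$ eventually lies in a cube disjoint from $B(x,r)$. From $U_k\subseteq B(x,2r)$ we get $\mu_{n_m}(U_k)\le C_{\mu_{n_m}}\mu_{n_m}(B(x,r))$; letting $m\to\infty$ gives $\mu_\infty(U_k)\le C_\X\liminf_m\mu_{n_m}(B(x,r))$, and then $k\to\infty$ gives $\mu_\infty(B(x,2r))\le C_\X\liminf_m\mu_{n_m}(B(x,r))$. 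On the other hand $\mu_{n_m}(B(x,r))\le\mu_{n_m}(V_k)\to\mu_\infty(V_k)$ for each $k$, so $\limsup_m\mu_{n_m}(B(x,r))\le\mu_\infty(\overline{B(x,r)})$, and altogether $\mu_\infty(B(x,2r))\le C_\X\,\mu_\infty(\overline{B(x,r)})$. Applying this with $r-\delta$ in place of $r$ and letting $\delta\downarrow0$, so that $\overline{B(x,r-\delta)}\subseteq B(x,r)$ while $B(x,2r-2\delta)\uparrow B(x,2r)$, we obtain $\mu_\infty(B(x,2r))\le C_\X\,\mu_\infty(B(x,r))$, as required. The two points I expect to need the most care are the passage from a compatible family of cube values to an honest Borel measure, for which I rely on the structural facts already quoted, and the open-versus-closed ball bookkeeping in the last step, which is exactly why the argument runs through both inner and outer dyadic approximations together with the shift $r\mapsto r-\delta$.
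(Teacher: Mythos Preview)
Your proof is correct and follows the same strategy as the paper's: normalize a minimizing sequence, use the dyadic system to extract a diagonal subsequence converging on cubes, build the limit measure, and pass to the limit in the doubling inequality. Your verification of $C_{\mu_\infty}\le C_\X$ is in fact more careful than the paper's, which simply asserts $\mu(B(x,r))=\lim_{n}\mu_n(B(x,r))$ by interchanging an infinite sum with a limit without justification; your inner/outer dyadic approximations combined with the shift $r\mapsto r-\delta$ handle this point rigorously.
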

\begin{proof}
    Let $\{\Tilde{\mu}_n\}_{n\in\N}$ be a sequence of measures on $\X$ such that $C_{\Tilde{\mu}_n}\to C_\X$. We normalize them by defining
    \begin{equation*}
        \mu_n:=\frac{\Tilde{\mu}_n}{\Tilde{\mu}_n(Q_1^0)},
    \end{equation*}
    so that $\mu_n(Q_1^0)=1$ for all $n\in\N$. Rescaling does not modify the doubling constant, so $C_{\mu_n} \to C_\X$ as well. 
    
    Let us now fix $j,k\in\mathbb N$ and see that the sequences $\{\mu_n(Q_{j}^{k})\}_{n\in\N}$ are bounded. Indeed, for each $j$ there is an $R=R(j)>0$ such that $Q_{j}^{0}\subset B(z_1^{0}, R)$. Define $\gamma=\ulcorner\log_2(R)\urcorner$. 
    We have that
    \begin{equation*}
        \mu_n(Q_{j}^{0})\leq\mu_n(B(z_1^{0},R))\leq (C_{\mu_n})^\gamma\mu_n(Q_1^{0})\leq(\sup_n C_{\mu_n})^\gamma.
    \end{equation*}
    The sequence $\{C_{\mu_n}\}$ is convergent and hence bounded, so the sequences of measures of dyadic cubes of scale $k=0$ are indeed bounded. If $k>0$, then for each $j\in \mathbb N$ there is $j'\in\mathbb N$ such that $Q_{j}^k\subset Q_{j'}^{0}$, and so $\mu_{n}(Q_{j}^k)\leq\mu_{n}(Q_{j'}^{0})$. Therefore, the sequence $\{\mu_{n}(Q_{j}^k)\}_{n\in\mathbb N}$ is bounded for all $j,k\in\mathbb N$.
    
    Hence, for each $j,k\in\mathbb N$ there is a subsequence $\mathbb M_{j,k}$ such that $\{\mu_{n}(Q_{j}^{k})\}_{n\in\mathbb M_{j,k}}$ converges. Cantor diagonalization yields a subsequence $\mathbb M\subset \mathbb N$ such that $\{\mu_{n}(Q_{j}^{k})\}_{n\in\mathbb M}$ converges for all $j,k\in\mathbb N$, and so we can define
    $$
    \mu(Q_j^k) := \lim_{n\in\mathbb M} \mu_{n}(Q_{j}^{k}).
    $$
    This defines $\mu$ over $\bigcup_{k\geq0}\D_k$. Next, all open balls are a disjoint, countable union of cubes, so this determines the measure unequivocally on $\X$. We finally claim $C_\mu=C_\X$. To see this, consider a ball $B(x,r)\subset \X$ and take a countable disjoint family $\{Q_i\} \subset \D$ such that
    \begin{equation*}
        B(x,r)=\bigcup_i Q_i.
    \end{equation*}
    Then
    \begin{align*}
\mu(B(x,r))=\sum_i\mu(Q_i)=\sum_i\lim_{n\in\mathbb M}\mu_{n}(Q_i)=\lim_{n\in\mathbb M}\mu_{n}(B(x,r)).
    \end{align*}
    Hence, given $x\in \X$, $r>0$, we indeed have
    \begin{equation*}
        \frac{\mu(B(x,2r))}{\mu(B(x,r))}=\lim_{n\in\mathbb M}\frac{\mu_{n}(B(x,2r))}{\mu_{n}(B(x,r))}\leq C_\X.
    \end{equation*}
\end{proof}

Given a metric space $(\X,d)$, let us denote the set of doubling minimizers by
$$
\DM(\X,d)=\{\mu:C_\mu=C_\X\}.
$$

\begin{prop} Let $(\X,d)$ be a metric space.
\begin{enumerate}
    \item[(i)] $\DM(\X,d)$ is a convex cone.
    \item[(ii)] $\DM(\X,d)$ is invariant under isometries.
\end{enumerate}
\end{prop}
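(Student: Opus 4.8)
The plan is to verify each property directly from the definition $\DM(\X,d)=\{\mu:C_\mu=C_\X\}$, using the basic scaling behaviour of the doubling constant. For part (i), the first observation I would make is that $C_{\lambda\mu}=C_\mu$ for every scalar $\lambda>0$, since the ratios $\mu(B(x,2r))/\mu(B(x,r))$ are unchanged under multiplication by a positive constant; hence $\DM(\X,d)$ is closed under positive scalar multiplication. The substantive part is closure under addition. Given $\mu,\nu\in\DM(\X,d)$, I would set $\sigma=\mu+\nu$ and estimate, for every $x\in\X$ and $r>0$,
\begin{equation*}
\frac{\sigma(B(x,2r))}{\sigma(B(x,r))}=\frac{\mu(B(x,2r))+\nu(B(x,2r))}{\mu(B(x,r))+\nu(B(x,r))}\leq\frac{C_\mu\,\mu(B(x,r))+C_\nu\,\nu(B(x,r))}{\mu(B(x,r))+\nu(B(x,r))}\leq\max\{C_\mu,C_\nu\}=C_\X,
\end{equation*}
where the first inequality uses $\mu(B(x,2r))\leq C_\mu\,\mu(B(x,r))$ and likewise for $\nu$, together with positivity and finiteness of all the balls involved (guaranteed since $\mu,\nu$ are doubling), and the elementary ``mediant'' bound $\frac{a+b}{c+d}\leq\max\{\frac ac,\frac bd\}$ for positive reals. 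Taking the supremum over $x,r$ gives $C_\sigma\leq C_\X$, and since $C_\X$ is by definition the infimum of $C_\tau$ over all doubling measures $\tau$, and $\sigma$ is plainly doubling, we get $C_\sigma=C_\X$, i.e. $\sigma\in\DM(\X,d)$. I should also note $\DM(\X,d)$ is nonempty by the preceding proposition, so it is a genuine convex cone.

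For part (ii), let $\Phi:(\X,d)\to(\X,d)$ be a surjective isometry and $\mu\in\DM(\X,d)$; I would show the pushforward $\Phi_\#\mu$ lies in $\DM(\X,d)$ as well. Since $\Phi$ maps balls to balls, $\Phi(B(x,r))=B(\Phi(x),r)$, so $(\Phi_\#\mu)(B(y,2r))/(\Phi_\#\mu)(B(y,r))=\mu(B(\Phi^{-1}(y),2r))/\mu(B(\Phi^{-1}(y),r))$, and taking the supremum over $y\in\X$, $r>0$—which by bijectivity of $\Phi$ is the same as the supremum over $x\in\X$, $r>0$ of $\mu(B(x,2r))/\mu(B(x,r))$—yields $C_{\Phi_\#\mu}=C_\mu=C_\X$. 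Thus $\Phi_\#\mu\in\DM(\X,d)$, which is the asserted invariance.

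I do not anticipate a genuine obstacle here: the only point requiring any care is the mediant inequality and the justification that all the ball measures appearing are strictly positive and finite (so that the ratios make sense and the inequalities are not vacuous), which follows from the standing assumption that $\X$ supports a doubling measure together with the defining property that doubling measures are positive and finite on all balls. The isometry statement is purely formal once one records that isometries carry balls to balls of the same radius.
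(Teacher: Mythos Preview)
Your proof is correct and follows essentially the same approach as the paper: both parts rest on the mediant inequality for (i) and the fact that isometries map balls to balls of equal radius for (ii). Your write-up is somewhat more explicit (you spell out the intermediate step $C_\mu\,\mu(B(x,r))+C_\nu\,\nu(B(x,r))$ and remark on positivity/finiteness of ball measures), but the underlying argument is identical.
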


\begin{proof}
(i) Clearly the doubling constant is invariant under scalar multiplication, $C_{\alpha\mu}=C_\mu$ for every $\alpha>0$. Take now $\mu_1,\mu_2\in \DM(\X,d)$ and let $\mu=\mu_1+\mu_2$. For $x\in \X$ and $r>0$, we have
\begin{align*}
\frac{\mu(B(x,2r))}{\mu(B(x,r))}&=\frac{\mu_1(B(x,2r))+\mu_2(B(x,2r))}{\mu_1(B(x,r))+\mu_2(B(x,r))}\\
&\leq \max\Big\{\frac{\mu_1(B(x,2r))}{\mu_1(B(x,r))},\frac{\mu_2(B(x,2r))}{\mu_2(B(x,r))}\Big\}\\
&\leq \max\{C_{\mu_1},C_{\mu_2}\}=C_{\X}.
\end{align*}
This shows that $\mu\in \DM(\X,d)$ as claimed. 

(ii) Let $\phi$ denote an isometry on $(\X,d)$ (i.e., a bijective map $\phi:\X\rightarrow \X$ such that $d(x,y)=d(\phi x, \phi y)$ for every $x,y\in \X$). Given $\mu\in \DM(\X,d)$, let $\mu_\phi(A)=\mu(\phi(A))$ for every Borel set $A\subset \X$. Clearly, we have
$$
C_{\mu_\phi}=\sup_{x,r}\frac{\mu(\phi(B(x,2r)))}{\mu(\phi(B(x,r)))}=\sup_{x,r}\frac{\mu(B(\phi(x),2r))}{\mu(B(\phi(x),r))}=C_{\mu}.
$$
\end{proof}

As is customary, for a locally integrable function $f$ defined on a measure space $(\Omega, \Sigma, \mu)$, and $A\in\Sigma$, we denote the corresponding average as
$$
\fint_A fd\mu:=\frac{1}{\mu(A)}\int_A fd\mu.
$$
Given a metric space $(\X,d)$ and $\mu$ a Borel regular measure on $(\X,d)$, we will say a function $f:\X\rightarrow \mathbb R$ is \textit{superharmonic with respect to $\mu$} if for every $x\in \X$ and $r>0$ 
\begin{equation}\label{eq:superharmonic}
        \fint_{B(x,2r)}fd\mu\leq\fint_{B(x,r)}fd\mu.
\end{equation}
This extends the notion of \textit{(strongly) harmonic functions} on a metric measure space $(\X,d,\mu)$, which are defined by the property 
$$
f(x)= \fint_{B(x,r)}f d\mu,
$$
for every $x\in \X$ and every $r>0$. These were introduced and analyzed in \cite{AGG2019, GG2009}, and clearly satisfy \eqref{eq:superharmonic}.

The following fact (whose straightforward proof is omitted) shows that a wealth of superharmonic functions allows us to construct new minimizers from known ones.

\begin{prop}
    If $\mu\in \DM(\X,d)$, and $\nu$ is absolutely continuous with respect to $\mu$ with Radon-Nikodym derivative $\frac{d\nu}{d\mu}$ being superharmonic with respect to $\mu$, then $\nu\in \DM(\X,d)$.
\end{prop}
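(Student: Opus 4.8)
The plan is to bound the doubling ratio of $\nu$ pointwise by that of $\mu$, using only the defining inequality of superharmonicity. Writing $f=\frac{d\nu}{d\mu}$, I would first record that for every ball $B\subset\X$ one has $\nu(B)=\int_B f\,d\mu=\mu(B)\,\fint_B f\,d\mu$, so that for each $x\in\X$ and $r>0$,
\begin{equation*}
\frac{\nu(B(x,2r))}{\nu(B(x,r))}=\frac{\mu(B(x,2r))}{\mu(B(x,r))}\cdot\frac{\fint_{B(x,2r)}f\,d\mu}{\fint_{B(x,r)}f\,d\mu}.
\end{equation*}
The first factor on the right is $\leq C_\mu=C_\X$ since $\mu\in\DM(\X,d)$, and the second is $\leq 1$ directly from \eqref{eq:superharmonic}. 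Taking the supremum over $x$ and $r$ then yields $C_\nu\leq C_\X$.

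To conclude that $\nu\in\DM(\X,d)$ — and not merely that its doubling constant is small — I would invoke the definition of $C_\X$ as an infimum: every doubling measure $\sigma$ on $\X$ satisfies $C_\sigma\geq C_\X$, so in particular $C_\nu\geq C_\X$, whence $C_\nu=C_\X$.

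The only delicate point, and the one I expect to be the main (though minor) obstacle, is making sure $\nu$ is genuinely a doubling measure, so that the quotients above are well defined and $\nu$ is eligible for the infimum defining $C_\X$. Positivity of $f$ together with $0<\mu(B(x,r))<\infty$ gives $\nu(B(x,r))>0$; for finiteness on balls one uses that $f$ is locally $\mu$-integrable (implicit in \eqref{eq:superharmonic} being meaningful), and if one prefers to derive it rather than assume it, it can be propagated from a single ball of finite $\nu$-measure to all balls by iterating \eqref{eq:superharmonic} with a fixed center and enlarging the radius. Once this is settled, the displayed one-line estimate finishes the proof.
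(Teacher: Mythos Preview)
Your argument is correct and is exactly the straightforward computation the paper has in mind; indeed, the paper omits the proof entirely, calling it ``straightforward,'' and your factorization of the doubling ratio into the $\mu$-ratio times the ratio of averages is the natural way to see it. Your remarks on the well-definedness of $\nu$ as a doubling measure go slightly beyond what the paper bothers to say, but are appropriate and correctly handled.
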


A natural question here is whether all doubling minimizers can be described by a density with respect to a given minimizer. Note that it is well known that there exist mutually singular doubling measures on $[0,1]$ \cite[Theorem 3]{AB1956}. In fact, there are examples of doubling measures in $\mathbb R^d$ that assign positive measure to a rectifiable curve \cite{GKS2010} (although all of them must vanish on hypersurfaces). We will show next the absolute continuity of minimizers under very specific conditions, related to the Hausdorff measure of the space and its minimal doubling constant. These follow from the work of Sj\"odin \cite{S1997} which in turn generalizes the work of Jonsson \cite{Jonsson1995} in $\mathbb R^d$. Let us first recall the following.

\begin{definition}\label{defd-medida}
A Borel measure $\mu$ on a metric space $\X$ is called a $d$-measure ($d>0$) if there exist a constant $C\geq1$ and $r_0>0$ such that for all $x\in \X$, $r\leq r_0$
\begin{equation}\label{ineqregular}
\frac{1}{C}r^d\leq\mu(B(x,r))\leq Cr^d.
\end{equation}
\end{definition}

This is slightly weaker than the well-known notion of Ahlfors $d$-regular measure. In particular, note that a metric space which supports a $d$-measure must have Hausdorff dimension equal to $d$ \cite[Proposition 1.1]{S1997}. We will also need the following.

\begin{definition}
Let $\mu$ be a Borel measure on a metric space $\X$, and let $C\geq 1$, $d\geq0$. We say that $\mu$ is $(C,d)$-homogeneous if it satisfies
\begin{equation}\label{cshomogenea}
\mu(B(x,\lambda r))\leq C\lambda^d\mu(B(x,r))
\end{equation}
for all $x\in \X$, $r>0$, and $\lambda\geq 1$.
\end{definition}

A $(C,d)$-homogeneous measure $\mu$ is, in particular, doubling with $C_\mu\leq C2^d$. Conversely, a doubling measure is $(C_\mu, \log_2 C_\mu)$-homogeneous. From this notion, one can define the Volberg–Konyagin dimension (see \cite{LS1998}) as
\begin{equation*}
\dim_{VK}(\X):=\inf \{d:\textrm{there exists a }(C,d)\textrm{-homogeneous measure }\mu\textrm{ on }\X\}.
\end{equation*}
In particular, we have $\dim_{VK}(\X)\leq \log_2 C_\X$. Volberg-Konyagin dimension can be thought of as a measure theoretic counterpart of the well-known notion of Assouad's dimension of a metric space. In fact, it is easy to see that the Hausdorff dimension of a metric space is always bounded above by $\dim_{VK}(\X)$. Therefore, on a metric space of Hausdorff dimension $d$ we have that $C_\X\geq 2^d$. The following relevant observation, which applies for instance to the particular case of $\mathbb R^d$, follows from \cite{S1997}.

\begin{prop}\label{minimizersabsolcont}
    If $\X$ is a metric space with a $d$-measure $m$ such that $C_\X=2^d$, then all doubling minimizers on $\X$ and $m$ are absolutely continuous with respect to each other.
\end{prop}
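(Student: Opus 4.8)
The plan is to deduce both halves of the mutual absolute continuity from the results of Sj\"odin in \cite{S1997}; once every doubling minimizer is shown to be mutually absolutely continuous with $m$, transitivity of this relation takes care of the phrase ``with respect to each other''. So fix a minimizer $\mu$, i.e. $C_\mu=C_\X=2^d$. Note first that the existence of the $d$-measure $m$ forces $\dim_H\X=d$, hence $C_\X\geq 2^d$, so the hypothesis $C_\X=2^d$ places us in the extremal (borderline) regime. Recall from the discussion above that a doubling measure is $(C_\mu,\log_2 C_\mu)$-homogeneous; thus $\mu$ is $(2^d,d)$-homogeneous, i.e.\ $(C,d)$-homogeneous with exponent exactly the $d$ of the $d$-measure (equivalently, $\dim_{VK}(\X)\leq d=\dim_H(\X)$). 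This matching of exponents is what makes everything work.

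For the inclusion $m\ll\mu$ I would argue by hand. Since $C_\mu=2^d$, the standard lower mass bound for doubling measures (with exponent $\log_2 C_\mu=d$) gives, for every ball $B=B(x_0,R)$, a constant $c_B>0$ with $\mu(B(y,r))\geq c_B\,r^d$ whenever $B(y,r)\subseteq B$; the $d$-measure property gives $m(B(y,r))\leq Cr^d$ for $r\leq r_0$. Let $E$ be Borel with $\mu(E)=0$. By separability of the (geometrically doubling) space $\X$ we may assume $E$ is contained in a fixed ball $B$; choose an open $U$ with $E\subseteq U\subseteq B$ and $\mu(U)$ as small as we like, and apply the $5r$-covering lemma to extract a disjoint family of balls $\{B(y_i,r_i)\}$ with $5r_i\leq r_0$, $\bigsqcup_i B(y_i,r_i)\subseteq U$ and $E\subseteq\bigcup_i B(y_i,5r_i)$. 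Then
\[
m(E)\leq \sum_i m\big(B(y_i,5r_i)\big)\leq C5^d\sum_i r_i^d\leq \frac{C5^d}{c_B}\sum_i \mu\big(B(y_i,r_i)\big)\leq \frac{C5^d}{c_B}\,\mu(U),
\]
and letting $\mu(U)\to 0$ yields $m(E)=0$.

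The inclusion $\mu\ll m$ is the genuinely hard direction, and this is where \cite{S1997} is indispensable. The naive covering argument collapses because a minimizer need not be upper $d$-regular: already in $\R^d$ with $d\geq 3$ there are minimizers with an unbounded (positive superharmonic) density, for which balls around a singularity carry mass of order $r^2\gg r^d$, so $\mu(B(y_i,r_i))\lesssim r_i^d$ simply fails. What compensates is that $(2^d,d)$-homogeneity holds uniformly across \emph{all} scales. The statement that a $(C,d)$-homogeneous measure on a metric space carrying a $d$-measure $m$ cannot give positive mass to an $m$-null set --- that is, $\mu\ll m$ --- is precisely Sj\"odin's extension \cite{S1997} to homogeneous spaces of Jonsson's theorem \cite{Jonsson1995} for $\R^d$; applying it to our $\mu$ finishes the proof. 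Consequently the only substantive step is this one, imported wholesale from \cite{S1997}, and the main obstacle is, unsurprisingly, exactly this direction $\mu\ll m$: the other half and the reduction are routine.
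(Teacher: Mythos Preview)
Your approach is essentially the same as the paper's: both reduce to verifying that a minimizer $\mu$ is $(2^d,d)$-homogeneous and then invoking \cite[Theorem~1]{S1997}. The only difference is that the paper quotes Sj\"odin's theorem for \emph{both} directions of the mutual absolute continuity at once, whereas you supply an independent $5r$-covering argument for the direction $m\ll\mu$ and cite Sj\"odin only for $\mu\ll m$; your extra argument is correct but redundant, since Sj\"odin's result already yields $m\ll\mu$ as well.
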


\begin{proof}
    Let $\mu\in \DM(\X)$, that is $C_\mu=2^d$. Therefore, for $x\in \X$, $r>0$ and $\lambda\geq1$, we have
	\begin{equation*}
		\mu (B(x,\lambda r))\leq (2^d)^{\lceil\log_2\lambda\rceil}\mu (B(x,r))\leq (2^{\log_2 (\lambda)+1})^d\mu (B(x,r))=2^d\lambda^d\mu (B(x,r)).
	\end{equation*}
    This means that $\mu$ is $(2^d,d)$-homogeneous. By \cite[Theorem 1]{S1997}, it follows that $\mu$ and $m$ are absolutely continuous with respect to each other.
\end{proof}

We finish this section with a method for constructing doubling measures whose doubling constant can be arbitrarily close to any given possible value. To be more precise, we have the following:

\begin{prop}
    The set $\lbrace C_\nu:\nu\textrm{ doubling measure in }\X\rbrace$ is dense in $[C_\X,\infty)$.
\end{prop}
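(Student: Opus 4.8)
The plan is to fix a minimizer $\mu$ (its existence is the content of the Proposition proved above) and build an explicit one–parameter family $\{\nu_t\}_{t\ge 0}$ of doubling measures with $\nu_0=\mu$, such that $t\mapsto C_{\nu_t}$ is continuous on $[0,\infty)$ while $C_{\nu_t}\to\infty$ as $t\to\infty$. The intermediate value theorem then shows that $\{C_{\nu_t}:t\ge0\}$, and hence a fortiori $\{C_\nu:\nu\text{ doubling in }\X\}$, is all of $[C_\X,\infty)$ (so in fact one gets the full interval, not merely a dense subset).

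Concretely, I would fix a small ball $B_0=B(x_0,r_0)$ and put $\nu_t=\mu+t\,\mu|_{B_0}=(1+t\mathbf{1}_{B_0})\mu$. Each $\nu_t$ is Borel regular with $0<\nu_t(B_0)=(1+t)\mu(B_0)<\infty$, and it is doubling, because for all $x,r$
\[
\nu_t(B(x,2r))\le(1+t)\mu(B(x,2r))\le(1+t)C_\X\,\mu(B(x,r))\le(1+t)C_\X\,\nu_t(B(x,r)),
\]
so $C_{\nu_t}\le(1+t)C_\X<\infty$. The decisive point for continuity is that for each fixed $x,r$ the ratio
\[
h_{x,r}(t):=\frac{\nu_t(B(x,2r))}{\nu_t(B(x,r))}=\frac{a+bt}{c+dt},\qquad a=\mu(B(x,2r)),\ c=\mu(B(x,r)),\ b=\mu(B(x,2r)\cap B_0),\ d=\mu(B(x,r)\cap B_0),
\]
is a Möbius function of $t$ with $h_{x,r}'(t)=\dfrac{bc-ad}{(c+dt)^2}$. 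Since $0\le b\le a$ and $0\le d\le c$ one has $|bc-ad|\le ac$ and $(c+dt)^2\ge c^2>0$, whence $|h_{x,r}'(t)|\le a/c\le C_\X$ for every $t\ge0$. Thus $\{h_{x,r}\}$ is uniformly $C_\X$–Lipschitz, and therefore $C_{\nu_t}=\sup_{x,r}h_{x,r}(t)$ is $C_\X$–Lipschitz on $[0,\infty)$, in particular continuous, with $C_{\nu_0}=C_\mu=C_\X$.

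It remains to force $C_{\nu_t}\to\infty$, which is where the hypothesis that $\X$ is not a singleton is used. I would choose the data so that there is an auxiliary ball $B(q,\rho)$ with $B(q,\rho)\cap B_0=\emptyset$ but $\mu(B(q,2\rho)\cap B_0)>0$: for distinct $a,b\in\X$ with $\delta=d(a,b)$, take $x_0=a$, $r_0=\delta/4$, $q=b$, $\rho=3\delta/4$; then $B(q,\rho)=B(b,3\delta/4)$ misses $B_0=B(a,\delta/4)$ by the triangle inequality, while $a\in B(q,2\rho)=B(b,3\delta/2)$ and $B(a,\delta/8)\subseteq B(q,2\rho)\cap B_0$, so $\mu(B(q,2\rho)\cap B_0)\ge\mu(B(a,\delta/8))>0$. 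For this ball $\nu_t(B(q,\rho))=\mu(B(q,\rho))>0$ does not depend on $t$, whereas $\nu_t(B(q,2\rho))=\mu(B(q,2\rho))+t\,\mu(B(q,2\rho)\cap B_0)\to\infty$, hence $C_{\nu_t}\ge\nu_t(B(q,2\rho))/\nu_t(B(q,\rho))\to\infty$. Combining this with continuity and $C_{\nu_0}=C_\X$ finishes the argument.

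The main obstacle I anticipate is exhibiting the \emph{straddling} ball $B(q,\rho)$ producing the blow-up: one must verify that in a general geometrically doubling metric space such a configuration around two distinct points is genuinely available (the triangle-inequality bookkeeping above is meant to isolate exactly what is needed). The Lipschitz estimate, by contrast, is a short computation once $h_{x,r}$ is written in Möbius form, and everything else is routine.
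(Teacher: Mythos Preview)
Your argument is correct and actually proves more than the paper: you obtain the entire interval $[C_\X,\infty)$, whereas the paper only establishes density (and explicitly leaves the full-interval question open in the Remark following the proposition). The triangle-inequality verification for the straddling ball is clean, and the uniform Lipschitz bound $|h_{x,r}'(t)|\le a/c\le C_\X$ is the right observation.

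The approaches differ in spirit. The paper perturbs $\mu$ \emph{downward} on a near-extremal ball $B(x_n,r_n)$ by a carefully solved factor $(1-\eta)$, forcing one specific doubling ratio to equal the target $C$ and then bounding all other ratios by $(C_\X-\eta)/(1-\eta)$ via a case analysis; this yields $C\le C_\nu\le C(1+\varepsilon/(c-1))$ and hence only density. You instead perturb $\mu$ \emph{upward} on a fixed ball $B_0$ by a continuous parameter $t$, prove equi-Lipschitz continuity of the ratio functions, and invoke the intermediate value theorem. Your route avoids the need to locate near-extremal balls or solve for $\eta$, and the Lipschitz estimate replaces the paper's case-by-case geometric bookkeeping. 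The payoff is a stronger conclusion with no $\varepsilon$ to manage; the cost is that you do not produce an explicit measure with doubling constant close to a prescribed $C$, though one could of course extract this from your family $\{\nu_t\}$ after the fact.
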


\begin{proof}
Let $\mu$ be a doubling minimizer on $\X$, and let $\lbrace B(x_j,r_j)\rbrace_{j\in\N}$ be a sequence of balls such that
\begin{equation*}
\frac{\mu(B(x_j,2r_j))}{\mu(B(x_j,r_j))}\xrightarrow{j\to\infty}C_\mu=C_\X.
\end{equation*}
Given $ C > C_\X $ and $ \varepsilon\in (0,1) $, we choose $ n \in \mathbb{N} $ such that
\begin{equation*}
C_\X - \frac{\mu(B(x_n,2r_n))}{\mu(B(x_n,r_n))} < \varepsilon.
\end{equation*}
For convenience, let us denote $c=\frac{\mu(B(x_n,2r_n))}{\mu(B(x_n,r_n))}$. Now, take $ \eta \in (0,1) $ such that
\begin{equation*}
    \frac{c - \eta}{1 - \eta} = C.
\end{equation*}
Note that $ \eta\mapsto (c - \eta)/(1 - \eta) $ is a continuous function on $ [0,1) $, and since $ c \leq C_\X < C $, there must exist such  $ \eta $. 

Now, let us define the measure $ \nu $ on $ \X $ given by
\begin{equation*}
    \nu(F) = (1 - \eta)\mu(F \cap B(x_n,r_n)) + \mu(F \cap B(x_n,r_n)^\mathsf{c}),
\end{equation*}
for each Borel set $ F \subset \X $. 

Clearly, we have
\begin{equation*}
    \frac{\nu(B(x_n,2r_n))}{\nu(B(x_n,r_n))} = \frac{\mu(B(x_n,2r_n)) - \eta \mu(B(x_n,r_n))}{(1 - \eta)\mu(B(x_n,r_n))} = \frac{c - \eta}{1 - \eta} = C,
\end{equation*}
hence $ C_\nu \geq C $. Now let $ x \in \X $ and $ r > 0 $. We distinguish two cases:
\begin{equation*}
    B(x,r)\cap B(x_n,r_n) = \emptyset \quad \text{and} \quad B(x,r)\cap B(x_n,r_n) \neq \emptyset.
\end{equation*}
In the first case, we have
\begin{align*}
    \frac{\nu(B(x,2r))}{\nu(B(x,r))} = \frac{\nu(B(x,2r))}{\mu(B(x,r))} \leq \frac{\mu(B(x,2r))}{\mu(B(x,r))} \leq C_\X < C.
\end{align*}
While if $ B(x,r)\cap B(x_n,r_n) \neq \emptyset $, then let us set $ S_1 = B(x,r)\cap B(x_n,r_n)$, $S_2 = B(x,r)\cap B(x_n,r_n)^\mathsf{c}$ and $ S_3 = B(x,2r)\setminus S_1 $. Thus, $ B(x,r) = S_1 \cup S_2 $ and $ B(x,2r) = S_1 \cup S_3 $, where both unions are disjoint. Then we have
\begin{align*}
    \nu(B(x,2r)) &= \nu(S_1) + \nu(S_3) = (1-\eta)\mu(S_1) + \nu(S_3)\\
    &= -\eta\mu(S_1) + \mu(S_1) + \nu(S_3) \leq -\eta\mu(S_1) + \mu(B(x,2r))\\
    &\leq -\eta\mu(S_1) + C_\X\mu(B(x,r)) = -\eta\mu(S_1) + C_\X\mu(S_1) + C_\X\mu(S_2)\\
    &= (C_\X - \eta)\mu(S_1) + C_\X\nu(S_2) = \frac{C_\X - \eta}{1 - \eta}\nu(S_1) + C_\X\nu(S_2)\\
    &\leq \frac{C_\X - \eta}{1 - \eta} \nu(B(x,r)).
\end{align*}
This implies that
\begin{equation*}
    C \leq C_\nu \leq \frac{C_\X - \eta}{1 - \eta} \leq  C \Big(1+ \frac{\varepsilon}{c - 1}\Big).
\end{equation*}
Since we can take $ \varepsilon $ arbitrarily small, this suffices to prove the statement.
\end{proof}

\begin{remark}
Note that in the case the metric space admits a doubling minimizer $\mu$ which attains its doubling constant, that is $$\frac{\mu(B(x,2r))}{\mu(B(x,r))}=C_\mu=C_\X$$ for some $x\in \X$, $r>0$, then one can take $\varepsilon=0$ in the argument above to show that $\lbrace C_\nu:\nu\textrm{ doubling measure in }\X\rbrace=[C_\X,\infty)$. For instance, this is the case for $\mathbb R^d$ with the distance induced by the norm $\|\cdot\|_p$ for any $1\leq p\leq \infty$. Although, in general, a doubling minimizer need not attain its doubling constant, we do not know of any explicit example where the set of doubling constants of measures on a metric space $\X$ does not coincide with the whole interval $[C_\X,\infty)$.
\end{remark}

\section{Discrete measure spaces} \label{sec:sec2}

We turn our attention to discrete measure spaces. We use standard terminology for graphs. We assume $\X$ is the set of vertices of a simple, connected graph with edge set $E(\X)$. Given $x\in \X$, its neighbors form the set
$$
N(x) = \{ y \in \X: (x,y) \in E(\X)\},
$$
and we call $|N(x)|$ the degree of $x$. On $\X$ we always consider the shortest path distance, so that $N(x) = \{y \in \X: d(x,y) = 1\}$. Measures $\mu$ on $\X$ are in one-to-one correspondence with densities $f_\mu:\X\to \mathbb{R}_+$ with respect to the counting measure $\#_\X$. We shall pay special attention to the local behavior of measures $\mu$. To that end, define
$$
C_\mu^0 = \sup_{x\in \X} \frac{\mu(B(x,2))}{\mu(B(x,1))}, 
$$
and say that the local case is critical for $(\X,\mu)$ if $C_\mu=C_\mu^0$. The hypothesis of the local case being critical will allow us to obtain several positive results in the sequel. The discrete (combinatorial) Laplacian of a function $f:\X \to \mathbb{R}$ is defined by
\begin{equation*}
        \Delta f(x):=\frac{1}{|N(x)|}\sum_{y\in N(x)} (f(x)-f(y))=f(x)-\frac{1}{|N(x)|}\sum_{y\in N(x)}f(y).
\end{equation*}
We say $f$ is harmonic if $\Delta f(x)=0$ for all $x\in \X$. If $\Delta f(x)\geq 0$ for all $x\in \X$ we call it superharmonic. In the graph setting, the notion of (super)harmonicity is a local one, and hence it can differ significantly from the generalization of the one used in the Introduction for $\R^d$. Let us now assume that $\X$ is regular, that is, there exists $N_0$ such that $|N(x)|=N_0$ for all $x\in \X$. If the local case is critical for $(\X,\#_\X)$, any doubling minimizer $\mu$ must satisfy $C_\mu^0\leq C_{\#_\X}^0$. But this means that
\begin{equation*}
    1+\frac{\sum_{y\in N(x)}\mu(y)}{\mu(x)} \leq \sup_{x\in G}\frac{\mu(x)+\sum_{y\in N(x)}\mu(y)}{\mu(x)}=C_\mu^0 \leq C_{\#_G}^0=1+|N(x)|,
\end{equation*}
which in turn implies
\begin{equation*}
    \frac{\sum_{y\in N(x)}\mu(y)}{|N(x)|}\leq \mu(x) \mbox{ for all } x \in \X. 
\end{equation*}
 The above discussion implies that, when the local case is critical, if $\mu$ is a doubling minimizer, then $f_\mu$ must be a positive, superharmonic function. We next use the well-known link between the existence of nontrivial positive superharmonic functions and transitivity of random walks to find obstructions to the existence of doubling minimizers in certain spaces. In particular, we will use the following fact: if $\X$ is a graph where the standard random walk is a recurrent Markov chain, then the only positive superharmonic functions on $\X$ are constants \cite[Theorem 1.16]{W2000}. By Theorem 1 in \cite{L1983}, this always happens when $\X$ is a regular graph such that there exists $c>0$ so that 
\begin{equation}\label{eq:CondRecurrencia}
|B(x,n)|\leq cn^2. 
\end{equation}

\begin{lem}\label{grafosGeneral}
    Let $\X$ be a regular graph such that the local case is critical for $(\X,\#_\X)$. If the standard random walk on $\X$ is recurrent, then $C_\X=\deg(\X)+1$ and $DM(\X)=\lbrace\alpha\#_\X\rbrace_{\alpha>0}$.
\end{lem}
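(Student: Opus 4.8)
The plan is to combine the structural observations that precede the statement with the recurrence hypothesis. First I would recall that, by the discussion above, the standing assumption that the local case is critical for $(\X,\#_\X)$ has two consequences: on the one hand, $C_{\#_\X}=C_{\#_\X}^0=1+|N(x)|=\deg(\X)+1$ (the supremum defining $C_{\#_\X}^0$ is attained at every vertex since $\X$ is regular and the counting measure gives each ball volume equal to its cardinality), so $C_\X\le \deg(\X)+1$; on the other hand, any doubling minimizer $\mu$ satisfies $C_\mu\le C_\X\le C_{\#_\X}^0$, and the chain of inequalities displayed just before the lemma forces $\frac{1}{|N(x)|}\sum_{y\in N(x)}f_\mu(y)\le f_\mu(x)$ for every $x\in\X$, i.e. $\Delta f_\mu(x)\ge 0$. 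Thus $f_\mu$ is a positive superharmonic function on $\X$.

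Next I would invoke the recurrence hypothesis. Since the standard random walk on $\X$ is a recurrent Markov chain, \cite[Theorem 1.16]{W2000} tells us that the only positive superharmonic functions on $\X$ are the constants. Applying this to $f_\mu$ gives $f_\mu\equiv c$ for some $c>0$, that is, $\mu=c\,\#_\X$. This shows $\DM(\X)\subseteq\{\alpha\#_\X\}_{\alpha>0}$. For the reverse inclusion one only needs that $\#_\X$ is itself a doubling minimizer: we already know $C_{\#_\X}=\deg(\X)+1$ and $C_\X\le\deg(\X)+1$, while $C_\X\le C_\mu=C_{\#_\X}$ for any minimizer $\mu$ — but more directly, for \emph{any} measure $\nu$ on $\X$ one has $C_\nu\ge C_\nu^0\ge 1+\min_x\frac{\sum_{y\in N(x)}\nu(y)}{\nu(x)}$, and since $\nu$ must give positive mass to each vertex, $C_\nu^0\ge 1+1=2$; to get the sharp lower bound $C_\nu\ge\deg(\X)+1$ I would note that if $C_\nu<\deg(\X)+1$ then the same superharmonicity argument applies to $f_\nu$, forcing $f_\nu$ constant and hence $C_\nu=C_{\#_\X}=\deg(\X)+1$, a contradiction. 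Therefore $C_\X=\deg(\X)+1$ and $\#_\X\in\DM(\X)$; combined with invariance of $\DM(\X)$ under scalar multiples (Proposition on the cone structure) we conclude $\DM(\X)=\{\alpha\#_\X\}_{\alpha>0}$.

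The step I expect to require the most care is establishing the sharp lower bound $C_\X\ge\deg(\X)+1$ cleanly, since a priori a competitor measure $\nu$ could have $C_\nu^0$ small while $C_\nu$ is governed by large-scale ratios, or conversely; the argument above sidesteps this by a dichotomy — either $C_\nu\ge\deg(\X)+1$ outright, or else $C_\nu<\deg(\X)+1\le C_{\#_\X}^0$ puts us in the regime where the displayed inequality chain applies and forces $f_\nu$ superharmonic, hence constant by recurrence, hence $C_\nu=\deg(\X)+1$ after all. One should double-check that the criticality hypothesis is genuinely being used only for $\#_\X$ (to pin down $C_{\#_\X}=C_{\#_\X}^0$) and that the superharmonicity deduction for a general minimizer $\mu$ needs only $C_\mu\le C_{\#_\X}^0$, which holds automatically since $C_\mu=C_\X\le C_{\#_\X}=C_{\#_\X}^0$. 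Finally, I would remark that the recurrence condition \eqref{eq:CondRecurrencia} gives a concrete sufficient condition, via \cite[Theorem 1]{L1983}, under which the lemma applies.
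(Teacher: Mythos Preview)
Your proof is correct and follows essentially the same approach as the paper: use criticality and regularity to get $C_{\#_\X}=\deg(\X)+1$, deduce that any minimizer has superharmonic density, and apply recurrence to force constancy. Your dichotomy for the lower bound $C_\X\ge\deg(\X)+1$ is more elaborate than necessary---once every minimizer is shown to be a multiple of $\#_\X$, the existence of minimizers (established earlier in the paper) immediately gives $C_\X=C_{\#_\X}=\deg(\X)+1$---but it is correct and does not depend on that existence result.
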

\begin{proof}
     Since $\X$ is regular, then
     \begin{equation*}
         \frac{|B(y,2)|}{|B(y,1)|}=\deg(\X)+1
     \end{equation*}
     for all $y\in \X$, and therefore the local case being critical implies $C_{\#_\X}=\deg(\X)+1$. On the other hand, given $\mu$ on $\X$ with $C_\mu=C_\X$, we must have
     \begin{equation*}
         \frac{\mu(B(y,2))}{\mu(B(y,1))}\leq C_\X\leq \deg(\X)+1
     \end{equation*}
     for all $y\in \X$, and so $f_\mu:\X\to\R^+$ must be a superharmonic function. But the random walk on $\X$ is recurrent, so all positive superharmonic functions must be constant.
\end{proof}

\begin{remark}
    In \cite{DST2023}, it is shown that $C_\X\geq r(A_\X)+1$, where $r(A_\X)$ is the spectral radius of the adjacency matrix of $\X$, and the equality holds for certain finite graphs. In fact, \cite[Proposition 18]{DST2023} provides a criterion in terms of the Perron eigenvector of the adjacency matrix of a finite graph $\X$ that characterizes when $C_\X= r(A_\X)+1$. Lemma \ref{grafosGeneral} can be seen as a reinforcement of the conclusions there in terms of the properties of the random walk in $\X$.
\end{remark}
Although without retrieving uniqueness, Theorem \ref{teor:graphs} allows us to conclude the counting measure is a doubling minimizer regardless of whether the local case is critical or not. We now provide its proof.
\begin{proof}[Proof of Theorem \ref{teor:graphs}]
     Suppose for a contradiction there is a measure $\mu$ on $\X$ such that $C_{\mu}<C_{\#_\X}$. Take a sequence of radii $n_k$ such that
    \begin{equation*}
        \frac{\#_\X(B(x,2n_k))}{\#_\X(B(x,n_k))}\xrightarrow{k\to\infty}C_{\#_\X}.
    \end{equation*}
    Fix some $k$ sufficiently large so that $C_\mu<\#_\X(B(x,2n_k))/\#_\X(B(x,n_k))$ and set $n=n_k$. Then, for some $\gamma<1$,
    \begin{equation*}
        \frac{\mu(B(x,2n))}{\#_\X(B(x,2n))}<\gamma\frac{\mu(B(x,n))}{\#_\X(B(x,n))}
    \end{equation*}
    for all $x\in\X$. Fix a point $0 \in\X$. The rest of the argument relies on the following observation: fixed $r>0$, for every $N\geq r$ and $x\in B(0,N-r)$ we have
    \begin{equation*}
        \lbrace y\in B(0,N):x\in B(y,r)\rbrace=B(x,r).
    \end{equation*}
    Furthermore, for every $x\in\X$ we have
    \begin{equation*}
        \lbrace y\in B(0,N):x\in B(y,r)\rbrace \subset B(x,r).
    \end{equation*}
    Therefore, for $N\geq2n$ we get
    \begin{align*}
        \mu(B(0,N-2n))&=\sum_{x\in B(0,N-2n)}\frac{\#_\X(B(x,2n))}{\#_\X(B(x,2n))}\mu(\lbrace x\rbrace)\\
        &=\sum_{x\in B(0,N-2n)}\sum_{\substack{y\in B(0,N):\\x\in B(y,2n)}}\frac{\mu(\lbrace x\rbrace)}{\#_\X(B(x,2n))}\\
        &= \sum_{y\in B(0,N)}\sum_{\substack{x\in B(0,N-2n) \cap B(y,2n)}}\frac{\mu(\lbrace x\rbrace)}{\#_\X(B(x,2n))} \\
        & \leq\sum_{y\in B(0,N)}\sum_{x\in B(y,2n)}\frac{\mu(\{x\})}{\#_\X(B(x,2n))} = \sum_{y\in B(0,N)}\frac{\mu(B(y,2n))}{\#_\X(B(y,2n))}\\
        &< \gamma \sum_{y\in B(0,N)}\frac{\mu(B(y,n))}{\#_\X(B(y,n))} = \frac{\gamma}{\#_\X(B(0,n))}\sum_{y\in B(0,N)}\sum_{x\in B(y,n)}\mu(\lbrace x\rbrace)\\&\leq\gamma\mu(B(0,N+n)).
    \end{align*}
    In particular, taking $N=2M-n$ above, we get $\mu(B(0,2M))\geq\gamma^{-1}\mu(B(0,2M-3n))$ if $2M\geq 3n$. Iterating this yields:
    \begin{equation*}
        \mu(B(0,2M))\geq \frac{1}{\gamma}\mu(B(0,2M-3n))\geq \left(\frac{1}{\gamma}\right)^k\mu(B(0,2M-3nk))
    \end{equation*}
    if $k\leq\frac{2M}{3n}$. Since $\X$ is infinite, we may take $M$ arbitrarily big to achieve this. Taking $k=\lfloor M/3n\rfloor$, choosing $M$ large enough yields a contradiction with $\mu$ being doubling as
    \begin{equation*}
        \mu(B(0,2M))\geq\left(\frac{1}{\gamma}\right)^{\lfloor M/3n\rfloor}\mu(B(0,M)).
    \end{equation*}
\end{proof}




\subsection{The case of $\mathbb{Z}^d$} For the remainder of this subsection, we denote by $B_d(x,k)$ the $\Z^d$-ball with center $x$ and radius $k\in\N$. We focus on the counting measure, which only requires computing the size $B_d(0,n)$, by translation invariance. It is easy to see that $|B_1(0,k)|=2k-1$ for each $m\in\Z$ and $k\in\N$. This allows one to compute the size of the balls in dimension 2.  
\begin{lem}\label{bolas2}
For all $k\in \N$, $|B_2(0,k)|=2k^2-2k+1=k^2+(k-1)^2$.
\end{lem}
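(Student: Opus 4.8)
The plan is to compute $|B_2(0,k)|$ by slicing the ball along its first coordinate, so that everything reduces to the one-dimensional count $|B_1(0,m)|=2m-1$ already recorded above. Since the shortest path distance on $\Z^2$ coincides with the $\ell^1$-distance, we have $B_2(0,k)=\{(a,b)\in\Z^2 : |a|+|b|\leq k-1\}$; for each fixed first coordinate $a$ with $|a|\leq k-1$, the admissible second coordinates $b$ are precisely those with $|b|\leq (k-|a|)-1$, i.e. they form a copy of the one-dimensional ball $B_1(0,k-|a|)$.

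Hence
\[
|B_2(0,k)|=\sum_{a=-(k-1)}^{k-1}|B_1(0,k-|a|)|=\sum_{a=-(k-1)}^{k-1}\bigl(2(k-|a|)-1\bigr).
\]
Writing $m=k-|a|$, the value $m=k$ is attained once (at $a=0$) and each value $m\in\{1,\dots,k-1\}$ is attained twice, so the sum equals $(2k-1)+2\sum_{m=1}^{k-1}(2m-1)$. Using $\sum_{m=1}^{k-1}(2m-1)=(k-1)^2$, this is $(2k-1)+2(k-1)^2=2k^2-2k+1$, and the identity $2k^2-2k+1=k^2+(k-1)^2$ is immediate.

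There is no genuine obstacle here; the only points requiring care are the open-ball convention (so that $B_d(0,k)$ consists of the points at distance at most $k-1$) and the bookkeeping of how often each slice size occurs. One could equally argue by induction on $k$: the difference $|B_2(0,k+1)|-|B_2(0,k)|$ counts the lattice points at $\ell^1$-distance exactly $k$ from the origin, of which there are $4k$ for $k\geq1$, which matches $2(k+1)^2-2(k+1)+1-(2k^2-2k+1)=4k$. Alternatively, passing to the rotated coordinates $u=a+b$, $v=a-b$, one has $|a|+|b|=\max(|u|,|v|)$, so the constraint becomes $|u|,|v|\leq k-1$ together with $u\equiv v\pmod 2$; counting pairs with both coordinates even plus pairs with both odd gives $(\#\{\text{even residues in }[-(k-1),k-1]\})^2+(\#\{\text{odd residues}\})^2=k^2+(k-1)^2$, which also explains that particular form of the answer.
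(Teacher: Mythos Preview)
Your proof is correct and follows essentially the same approach as the paper: slice $B_2(0,k)$ along the first coordinate, recognise each slice as a one-dimensional ball $B_1(0,k-|a|)$, and sum $(2k-1)+2\sum_{m=1}^{k-1}(2m-1)=2k^2-2k+1$. Your additional remarks (the induction via $|B_2(0,k+1)|-|B_2(0,k)|=4k$ and the rotated-coordinates interpretation of $k^2+(k-1)^2$) go beyond the paper's proof but are sound and add nice perspective.
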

\begin{proof}
Define
\begin{equation*}
	H_0:=B_2(0,k)\cap\{(y_1,y_2):y_1=0\}=\{(0,y_2): |y_2|<k\}.
\end{equation*}
We have $|H_0|=|B_1(0,k)|$. Similarly, if 
\begin{equation*}
	H_1^+:=B_2(0,k)\cap\{(y_1,y_2):y_1=x_1+1\}=\{(1,y_2): |y_2|<k-1\},
\end{equation*}
then $|H_1^+|=|B_1(0,k-1)|$, and the same happens to
\begin{equation*}
	H_1^-=B_2(0,k)\cap\{(y_1,y_2):y_1=-1\}.
\end{equation*}
Analogously, given $1\leq j\leq k-1$ we may set
\begin{equation*}
    H_j^\pm:=B_2(0,k)\cap\{(y_1,y_2):y_1=\pm j\}=\{(\pm j,y_2): |y_2|<k-j\},
\end{equation*}
and $|H_j^+|=|H_j^-|=|B_1(0,k-j)|$. It is easy to see that
\begin{equation*}
	B_2(0,k)=\bigsqcup_{j=1}^{k-1}(H_j^+\sqcup H_j^-)\sqcup H_0,
\end{equation*}
which yields
\begin{align*}
	|B_2(0,k)|& =|B_1(0,k)|+2\sum_{j=1}^{k-1}|B_1(0,k-j)| \\
    & =2k-1+2\sum_{j=1}^{k-1}(2j-1)=2k^2-2k+1.
\end{align*}
\end{proof}

\begin{remark}
The technique used to prove Lemma \ref{bolas2} gives us an inductive algorithm to compute $P_d(k):=|B_d(0,k)|$ for each $d$ and $k$. Indeed, if $P_d(k)$ is known, we may set
\begin{equation*}
	H_0:=B_{d+1}(0,k)\cap\{(y_1,\ldots, y_{d+1}):y_1=0\},
\end{equation*}
and $|H_0|=|B_d(0,k)|$. Similarly, for $0\leq j \leq k-1$
\begin{equation*}
	|H^\pm_j|=|\{(y_1,\ldots, y_{d+1}):y_1=\pm j\}|=|B_d(0,k-j)|,
\end{equation*}
and we still have the decomposition
\begin{equation*}
	B_{d+1}(0,k)=\bigsqcup_{j=1}^{k-1}(H_j^+\sqcup H_j^-)\sqcup H_0.
\end{equation*}
Therefore,
\begin{equation*}
	|B_{d+1}(0,k)|=|B_d(0,k)|+2\sum_{j=1}^{k-1}|B_d(0,k-j)|=P_d(k)+2\sum_{j=1}^{k-1}P_d(k-j).
\end{equation*}
Assuming $P_d(k)$ is a polynomial of degree $d$, the sum in the formula above  gives a polynomial of degree $d+1$ via Faulhaber's formula. We next list what one gets in dimensions $3$ and $4$:
\begin{align*}
	|P_3(k)|&=\frac{4}{3}k^3-2k^2+\frac{8}{3}k-1, \\
	|P_4(k)| &=\frac{2}{3}k^4-\frac{4}{3}k^3+\frac{10}{3}k^2-\frac{8}{3}k+1.
\end{align*}    
\end{remark}
We can now try to compute
\begin{equation*}
	C_{\Z^d}=C_{\#_d}=\sup_{k\in\N} \frac{|P_d(2k)|}{|P_d(k)|}.
\end{equation*}
At least in the case $1\leq d\leq 4$ the supremum above is attained. Indeed, for each $d$ the function $P_d(2r)/P_d(r)$ is eventually decreasing, so one only has to check a finite number of values to calculate its maximum. Figure \ref{fig-cocientes} depicts the functions $P_d(2r)/P_d(r)$ for $1\leq d\leq4$ and their maxima.
\begin{figure}[h]\label{fig-cocientes}
\centering
\includegraphics[scale=0.5]{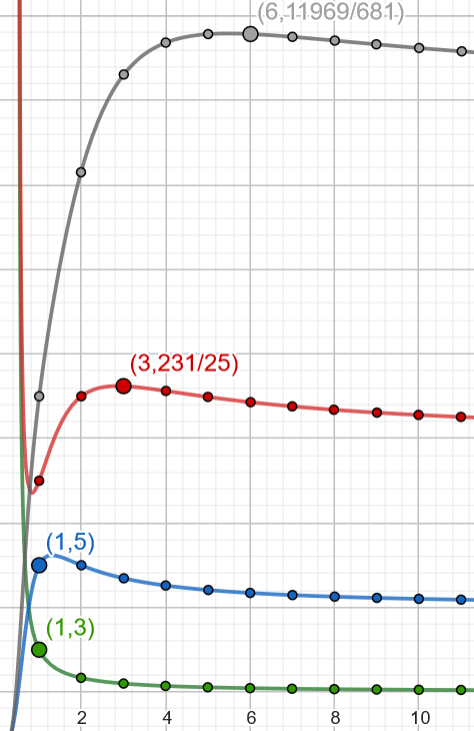}
\caption{\small{$d=1$: green; $d=2$: blue; $d=3$: red; $d=4$: gray.}}
\end{figure}
It can be checked that $C_{\Z}=3$, $C_{\Z^2}=5$, $C_{\Z^3}=231/25=9.24$ and $C_{\Z^4}=11969/681\approx 17.58$.  In any case, $C_{\Z^d}$ has to grow exponentially: since $P_d(k)$ is a polynomial of degree $d$, we must have
    $$
    C_{\Z^d}\geq \lim_{k\to\infty} \frac{P_d(2k)}{P_d(k)} = \lim_{k\to\infty} \frac{(2k)^d}{k^d} = 2^d. 
    $$
One important consequence of the above computations about the size of the balls in $\mathbb{Z}^d$ is that both $C_{\#_1}$ and $C_{\#_2}$ are attained at small balls. This allows us to finish the discussion characterizing all doubling minimizers in those two cases.

\begin{cor}\label{cor:minsZZ2}
    $$
    \DM(\Z) = \{\alpha \cdot \#_\Z: \; \alpha >0\} \quad \mbox{ and } \quad \DM(\Z^2) = \{\alpha \cdot \#_{\Z^2}: \; \alpha >0\}.
    $$
\end{cor}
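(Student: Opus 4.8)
The plan is to obtain both statements directly from Lemma~\ref{grafosGeneral}; the only point that is not completely formal is checking that the local case is critical for $(\Z,\#_\Z)$ and $(\Z^2,\#_{\Z^2})$, i.e. that $C_{\#_d}=C_{\#_d}^0$ for $d=1,2$. First I would record, using the convention $B(x,r)=\{y:d(y,x)<r\}$, that $|B_1(0,1)|=|B_2(0,1)|=1$, $|B_1(0,k)|=2k-1$, and $|B_2(0,k)|=2k^2-2k+1$ (Lemma~\ref{bolas2}); hence $C_{\#_1}^0=|B_1(0,2)|/|B_1(0,1)|=3$ and $C_{\#_2}^0=|B_2(0,2)|/|B_2(0,1)|=5$.

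Next I would compute the global constants. Since $\Z^d$ is vertex-transitive, Theorem~\ref{teor:graphs} gives $\#_{\Z^d}\in\DM(\Z^d)$, so $C_{\Z^d}=C_{\#_d}=\sup_{k\in\N}P_d(2k)/P_d(k)$. For $d=1$ the quotient $P_1(2k)/P_1(k)=(4k-1)/(2k-1)$ is strictly decreasing in $k$, so $C_{\#_1}=3$. For $d=2$, writing $g(k)=P_2(2k)/P_2(k)=(8k^2-4k+1)/(2k^2-2k+1)$, one checks that $g(k)<5$ is equivalent to $(k-1)(k-2)>0$, so $g(1)=g(2)=5$ and $g(k)<5$ for $k\ge 3$, whence $C_{\#_2}=5$. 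This is the only genuine computation; it amounts to the elementary observation, already used after Figure~\ref{fig-cocientes}, that $P_d(2r)/P_d(r)$ is eventually decreasing, so its supremum is attained and located among finitely many small radii. In particular $C_{\#_1}=C_{\#_1}^0$ and $C_{\#_2}=C_{\#_2}^0$, i.e. the local case is critical in both.

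Then I would verify the remaining hypotheses of Lemma~\ref{grafosGeneral}. The graphs $\Z$ and $\Z^2$ are regular, of degree $2$ and $4$ respectively, and $|B_1(x,n)|=2n-1\le n^2$ while $|B_2(x,n)|=2n^2-2n+1\le 2n^2$, so condition \eqref{eq:CondRecurrencia} holds and the standard random walk is recurrent on each of them (equivalently, this is Polya's recurrence theorem in dimensions $1$ and $2$). Applying Lemma~\ref{grafosGeneral} then yields $C_\Z=\deg(\Z)+1=3$ with $\DM(\Z)=\{\alpha\#_\Z\}_{\alpha>0}$, and $C_{\Z^2}=\deg(\Z^2)+1=5$ with $\DM(\Z^2)=\{\alpha\#_{\Z^2}\}_{\alpha>0}$, which is the assertion.

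I do not expect any real obstacle here: the argument is a bookkeeping application of Lemma~\ref{grafosGeneral}, and the ``hard'' step is the trivial maximization establishing criticality of the local case. It is worth noting that this scheme stops precisely at $d=2$: for $d\ge 3$ one has $C_{\#_d}>\deg(\Z^d)+1$ (for instance $C_{\Z^3}=231/25>7$), so the local case is no longer critical, consistently with the fact that the random walk on $\Z^d$ is then transient and $\Z^d$ carries nonconstant positive superharmonic functions.
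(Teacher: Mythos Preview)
Your proposal is correct and follows exactly the paper's route: verify that the local case is critical for $(\Z,\#_\Z)$ and $(\Z^2,\#_{\Z^2})$ via the explicit ball-size formulas, note recurrence of the simple random walk, and apply Lemma~\ref{grafosGeneral}. The only difference is that you spell out the maximization of $P_d(2k)/P_d(k)$ in detail, whereas the paper simply refers back to the computations preceding the corollary.
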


\begin{proof}
    By the computations above, the local case is critical for both $\Z$ and $\Z^2$, while the random walk is recurrent in $\Z$ and $\Z^2$, so there are no nonconstant positive superharmonic functions. Therefore, in both cases the only minimizer is the counting measure, or a constant multiple of it.
\end{proof}

\begin{remark}
The case of $\X=\mathbb{Z}$ had already been studied in \cite{DST2025}. The following elementary argument shows that the counting measure is the only doubling minimizer: if $\mu$ is a measure on $\mathbb{Z}$ with $C_\mu\leq 3$, suppose there is $j\in\Z$ such that $\mu(\{j-1\})- \mu(\{j\})>0$. Since $B(j,1)=\{j\}$ and $B(j,2)=\{j-1,j,j+1\}$, we have
    \begin{equation*}
        3\mu(\{j\})\geq \mu(\{j-1\})+\mu(\{j\})+\mu(\{j+1\})=2\mu(\{j\})+\varepsilon+\mu(\{j+1\}),
    \end{equation*}
    because $C_\mu\leq3$. Therefore, $\mu(\{j+1\})\leq \mu(\{j\})-\varepsilon$. But looking now at the balls $B(j+1,1)$ and $B(j+1,2)$, it follows that $\mu(\{j+2\})\leq \mu(\{j+1\})-\varepsilon\leq \mu(\{j\})-2\varepsilon$. Iterating this process we get
    \begin{equation*}
        \mu(\{j+n\})\leq \mu(\{j\})-n\varepsilon.
    \end{equation*}
    Therefore, for $n>\mu(\{j\})\varepsilon^{-1}$ we find $\mu({j+n})<0$, and so $\mu$ is not non-negative. This argument is simpler than the one offered above or that in \cite{DST2025}, but it does not carry over to $\mathbb{N}$, where other minimizing examples can be found.
\end{remark}

We end this subsection with a natural conjecture:

\begin{open}
    As explained above, in dimensions $1$ and $2$ the local case is critical, but in higher dimensions that is not always the case. Additionally, if $d \geq 3$, then the natural random walk is transitive, and so there are nonconstant positive superharmonic functions in $\mathbb{Z}^d$. That could be interpreted as evidence that there should be more minimizers other than the counting measure. One such function for $d=3$ seems to be $f(x)=\|x\|_2^{-1/2}$. If we set $\mu=fd\#_{\Z^3} $, it is difficult to estimate $C_\mu$, but some preliminary computational work shows that
    \begin{equation*}
        \frac{\mu(B_3(x,2n))}{\mu(B_3(x,n))}<9.24=C_{\#_3},
    \end{equation*}
    for $x$ and $n$ not too large. The maximum seems to be attained at $n=3$ and approaches $9.24$ as $\|x\|$ grows, which makes us think that $\mu$ could be a doubling minimizer. In analogy with our results in the continuous setting, we conjecture that that there are plenty of independent doubling minimizers in $\Z^d$ for $d\geq 3$. Whether this is the case is, to the best of our knowledge, open at the moment. 
\end{open}

\subsection{Other graphs} We end this section illustrating the general observations above with a few examples of regular graphs for which we can compute all doubling minimizers.

\begin{example}
    Let $T$ be the graph whose vertices are those of a plane tessellation by regular triangles, and with edges corresponding to the sides of these triangles. $T$ is a regular, planar graph of degree $6$. One can see that, for $j\geq 1$, 
    \begin{equation*}
        |B_T(x,j)\setminus B_T(x,j-1)|=6(j-1), \mbox{ and so } |B_T(x,k)|=1+\sum_{j=1}^{k-1}6j=3k^2-3k+1.
    \end{equation*}
    The function $f(k) = |B_T(x,2k)|/|B_T(x,k)| $ attains its maximum over the natural numbers at $k=1$, so the local case is critical for $T$. By \eqref{eq:CondRecurrencia}, the random walk on $T$ is recurrent, and so by Lemma \ref{grafosGeneral}, we have $C_T = \deg(T)+1=7$ and  
    \begin{equation*}
        \DM(T)=\lbrace \alpha\cdot \#_T : \; \alpha >0\rbrace.
    \end{equation*}
    It is interesting to compare the behavior of this regular graph of degree $6$ with $\Z^3$, where the local case is not critical.
\end{example}

\begin{remark}
    If in the above example we remove a few vertices, the behavior changes significantly. Consider the tessellation of the plane by regular hexagons, which is a regular graph where each vertex has degree $3$. One can see that
     \begin{equation*}
        |B_H(x,j)\setminus B_H(x,j-1)|=3(j-1). 
    \end{equation*} 
    for each $j>1$ and $x\in H$. Then 
    \begin{equation*}
        |B_H(x,k)|=1+\sum_{j=1}^{k-1}3j=\frac{3}{2}k^2-\frac{3}{2}k+1
    \end{equation*}
    for all k, and the function $f(k)=|B_H(x,2k)|/|B_H(x,k)|$ attains its global maximum at $k=2$, with $f(2)=19/4$, i.e., the local case is not critical for $H$. Therefore, there may exist doubling minimizers on $H$ different from $\#_H$. In any case, $\#_H\in\mathrm{DM}(H)$ by Theorem \ref{teor:graphs} and $C_H=19/4$.  
\end{remark}

\begin{example}
Consider the $3$-regular graph $E$ with vertices
\begin{equation*}
    \lbrace k_a: k\in\Z,1\leq a\leq 6\rbrace,
\end{equation*}
and edges
    \begin{align*}
        \left\{ \right. & \lbrace k_1,k_2\rbrace,\lbrace k_1,k_3\rbrace,\lbrace k_2,k_3\rbrace,\lbrace k_3,k_4\rbrace,\lbrace k_4,k_5\rbrace,\lbrace k_5,k_2\rbrace, \\ & \left. \lbrace k_4,k_6\rbrace,\lbrace k_5,k_6\rbrace,\lbrace k_6,(k+1)_1\rbrace:k\in\Z \right\}. 
    \end{align*}
Figure \ref{fig-grafoC} shows a portion of $E$. The rate of growth of the size of balls is linear in the radius, and therefore, by \eqref{eq:CondRecurrencia} the random walk on $E$ is recurrent. On the other hand, one can check that the local case is critical for $E$ (although $\#_E$ is not invariant under translations), and so by \ref{grafosGeneral} $C_E=4$ and
    \begin{equation*}
        \DM(E)=\lbrace\alpha \cdot \#_E: \alpha > 0\rbrace_{\alpha>0}.
    \end{equation*}

    \begin{figure}[h]\label{fig-grafoC}
        \centering
        \includegraphics[scale=0.4]{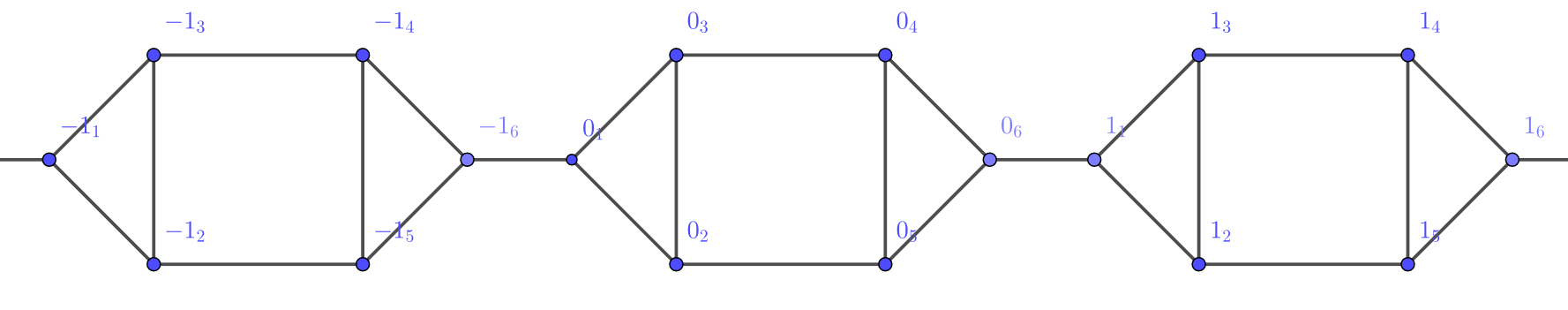}
        \caption{A piece of $E$.}
    \end{figure}
\end{example}

\begin{example}
For $m\in\N$, consider the graph $K_m\times\Z$ with vertices
 \begin{equation*}
        \lbrace k_a: k\in\Z, 1\leq a\leq m\rbrace,
\end{equation*}
and edges
\begin{equation*}
    \lbrace \lbrace k_a,k_b\rbrace:k\in\Z, 1\leq a,b\leq m, a\neq b\rbrace\cup\lbrace \lbrace k_a,(k+1)_a\rbrace,\lbrace k_a,(k-1)_a\rbrace:k\in\Z, 1\leq a\leq m\rbrace. 
\end{equation*}
$K_m\times\Z$ generalizes the integers --and indeed $K_1\times\Z=\Z$. It is easy to see that $|B(x,2)|=m+2$ and $|B(x,k)|-|B(x,k-1)|=2m$ if $k \geq 3$. Therefore, $|B(x,k)|=m(2k-3)+2$. As above, the local case is critical and the balls grow sub-quadratically, so 
\begin{equation*}
    \DM(K_m\times\Z)=\lbrace \alpha\cdot \#_{K_m\times\Z}: \; \alpha >0\rbrace,
\end{equation*}
for all $m\in\N$.
\end{example}

\section{Results for $\mathbb{R}^d$} \label{sec:sec3}
We now address the question of what are the doubling minimizers in $\R^d$. A simple geometric argument in \cite[Proposition 5.1]{ST2019} shows that $C_{(\R^d,\|\cdot\|_p)}=2^d$ for every $1\leq p\leq\infty$, and it follows that Lebesgue measure $\lambda_d\in \DM(\R^d)$. Our problem is then reduced to studying whether there are any other measures minimizing the doubling constant. We are able to answer this question in all dimensions for the Euclidean norm, whereas only some special cases and partial results are obtained for other $\ell^p$ norms.\par
Analogous uniqueness results to those for $\Z^d$ when $d\leq2$ hold in $(\R^d,\|\cdot\|_2)$ when $d\leq 2$, the Lebesgue measure playing the role of counting measure. In the two-dimensional case, we shall reduce the problem to the study of densities with respect to the Lebesgue measure by means of Proposition \ref{minimizersabsolcont}. However, we are able to show uniqueness directly through a discretization argument in the one-dimensional case.
\begin{prop}\label{prop:Rminimizers}
    $\DM(\R)=\lbrace\alpha\lambda\rbrace_{\alpha>0}$.
\end{prop}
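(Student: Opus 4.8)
The plan is to discretize a putative minimizer on $\R$ and import the rigidity already established for $\Z$ in Corollary \ref{cor:minsZZ2}. First I would note that, by Proposition \ref{minimizersabsolcont} applied with $m=\lambda$, $d=1$ and $C_\R=2$, every $\mu\in\DM(\R)$ is mutually absolutely continuous with $\lambda$, so $\mu=f\,d\lambda$ for some a.e.\ positive, locally integrable density $f$. Writing out the condition $C_\mu\le 2$ for balls centered at $x$ with radius $r$ gives, for all $x\in\R$ and $r>0$,
\begin{equation*}
\int_{x-2r}^{x+2r} f(t)\,dt \le 2\int_{x-r}^{x+r} f(t)\,dt,
\end{equation*}
i.e.\ $F(x+2r)-F(x-2r)\le 2\big(F(x+r)-F(x-r)\big)$, where $F$ is an antiderivative of $f$. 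This is exactly the statement that the absolutely continuous measure $\mu$ is superharmonic with respect to $\lambda$ in the sense of \eqref{eq:superharmonic}.

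Next I would discretize. Fix a scale $h>0$ and define a measure $\mu_h$ on $\Z$ by $\mu_h(\{k\})=\mu\big([kh,(k+1)h)\big)=\int_{kh}^{(k+1)h} f$. The key claim is that $C_{\mu_h}^0\le 3=C_\Z$: since $B(k,1)=\{k\}$ and $B(k,2)=\{k-1,k,k+1\}$ in $\Z$, this amounts to
\begin{equation*}
\mu\big([(k-1)h,(k+2)h)\big)\le 3\,\mu\big([kh,(k+1)h)\big),
\end{equation*}
which one obtains from the continuous inequality above by choosing the ball in $\R$ centered at the midpoint $x=(k+\tfrac12)h$ with radius $r=\tfrac{3h}{2}$, noting that $B(x,2r)=\big((k-2)h,(k+2)h+\tfrac{h}{2}\big)\supset[(k-1)h,(k+2)h)$ while $B(x,r)=(kh,(k+1)h+\tfrac h2)$; a short bookkeeping of the endpoints (or a limiting/averaging argument over nearby centers to absorb the half-cell discrepancies) shows the displayed inequality holds, so in particular $C_{\mu_h}\le 3$. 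Then by the uniqueness part of Corollary \ref{cor:minsZZ2}, $\mu_h$ is a constant multiple of $\#_\Z$, i.e.\ the increments $\int_{kh}^{(k+1)h} f$ are all equal, for every $k\in\Z$. Letting $h$ range over a sequence tending to $0$ (say $h=2^{-m}$) forces the averages of $f$ over all dyadic intervals of every scale to be equal, and by Lebesgue differentiation $f$ is constant a.e., which gives $\mu=\alpha\lambda$.

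The main obstacle is the discretization step: balls in $\Z$ centered at a vertex are ``off by a half-cell'' compared to the $\R$-balls that naturally control the cell masses, so one cannot directly read off $C_{\mu_h}^0\le 3$ from a single instance of the continuous inequality. I expect to handle this either by a careful choice of center and radius so that the relevant $\R$-ball sandwiches $[(k-1)h,(k+2)h)$ exactly between a superset and the cell $[kh,(k+1)h)$, or — cleaner — by averaging the continuous doubling inequality over all centers $x$ in a small interval, which smooths out the endpoint effects and yields the discrete bound with no loss. Once the discrete bound is in hand, everything else is a routine application of Corollary \ref{cor:minsZZ2} and the Lebesgue differentiation theorem.
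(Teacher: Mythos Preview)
Your overall architecture matches the paper's: discretize $\mu$ at every scale, show each discretization is a doubling minimizer on $\Z$, invoke the uniqueness of the counting measure there, and reassemble. Using Proposition \ref{minimizersabsolcont} and Lebesgue differentiation at the end is a cosmetic variant of what the paper does directly with $\mu$ on dyadic intervals; that part is fine.

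The genuine gap is in your ``key claim,'' and it is not a half-cell bookkeeping issue. What you need is
\[
\mu\big([(k-1)h,(k+2)h)\big)\le 3\,\mu\big([kh,(k+1)h)\big),
\]
which compares an interval of length $3h$ to the concentric interval of length $h$: this is a \emph{tripling} inequality, and a single instance of $C_\mu\le 2$ cannot produce it. With your stated center $x=(k+\tfrac12)h$ and $r=\tfrac{3h}{2}$ one actually has $B(x,r)=((k-1)h,(k+2)h)$ and $B(x,2r)=((k-\tfrac52)h,(k+\tfrac72)h)$, so doubling only bounds a six-cell window by twice the three-cell window --- the wrong comparison entirely. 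Averaging the doubling inequality over nearby centers just reproduces the same $2{:}1$ relation between intervals whose lengths are in ratio $2$; it cannot manufacture a $3{:}1$ bound. Without the tripling bound you cannot get $C_{\mu_h}^0\le 3$, and without that the appeal to Corollary \ref{cor:minsZZ2} collapses (there are many nonconstant measures on $\Z$ with, say, $C_{\mu}^0\le 4$).

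The paper closes exactly this gap with a short but essential step: apply doubling at the two shifted centers $x\pm r$ and add. Using that $B(x,2r)$ is the union of $B(x+r,r)$ and $B(x-r,r)$ up to a point,
\[
4\mu(B(x,r))\ \ge\ 2\mu(B(x,2r))\ \ge\ \mu(B(x+r,2r))+\mu(B(x-r,2r))\ =\ \mu(B(x,3r))+\mu(B(x,r)),
\]
whence $\mu(B(x,3r))\le 3\mu(B(x,r))$ for all $x$ and $r$. This tripling lemma is precisely the missing ingredient; with it in hand your discretization step goes through (indeed for \emph{all} radii, not just the local one), and the rest of your outline is correct.
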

\begin{proof}
    Let $\mu$ be a doubling measure on $(\R,|\cdot|)$ such that $C_\mu=2$. We claim that 
\begin{equation}\label{eq:triplante}
\mu(B(x,3r))\leq3\mu(B(x,r))\mbox{ for all }x\in \R, r>0.   
\end{equation}
Indeed, 
\begin{align*}
        4\mu(B(x,r))&\geq2\mu(B(x,2r))=2(\mu(B(x+r,r))+\mu(B(x-r,r)))\\&\geq\mu(B(x+r,2r))+\mu(B(x-r,2r))\\&=\mu(B(x,r))+\mu(B(x,3r)).
\end{align*}
Next, for $k,j\in\Z$, let $I_j^k=[2^{-k}j,2^{-k}(j+1))$ denote the standard dyadic intervals. For each $k\in\Z$ we consider the natural discretization $\mu_k$ of $\mu$ in $\Z$ given by
\begin{equation}\label{discretization}
    \mu_k(\{j\})=\mu(I_j^k).
\end{equation}
Let us see that $C_{\mu_k}=3$ for all $k\in\Z$. Indeed, take $j\in\Z$ and $r>0$. Let $n\in\N$ be such that $n-1<r\leq n$. By \eqref{eq:triplante} we have
\begin{align*}
    \mu_k(B(j,2r))&\leq \mu((2^{-k}(j-2n+1),2^{-k}(j+2n))=\mu\left(B\left(2^{-k}\left(j+\frac{1}{2}\right), \frac{(4n-1)2^{-k}}{2}\right)\right)\\
    &\leq3\mu\left(B\left(2^{-k}\left(j+\frac{1}{2}\right),\frac{2^{-k}(4n-1)}{6}\right)\right)\\
    &=3\mu\left(2^{-k}\left(j-\frac{2n}{3}+\frac{2}{3}\right),2^{-k}\left(j+\frac{2n}{3}+\frac{1}{3}\right)\right) \\
    & \leq 3\mu(2^{-k}(j-n+1),2^{-k}(j+n)) \leq3\mu_k(B(j,r)).
\end{align*}
By Corollary \ref{cor:minsZZ2}, $\mu_k$ is a multiple of $\#_{\Z}$ for each $k\in\Z$.
Let $\alpha=\mu([0,1))$. Then, for $k\leq0$ it follows that
    \begin{equation*}
        \mu(I_j^k)=\mu_k(j)=\mu_k(0)=\mu(I_0^k)=\mu\Bigg(\bigcup_{i=1}^{2^{-k}}I_i^0\Bigg)=\sum_{i=1}^{2^{-k}}\mu_0(i)=2^{-k}\alpha.
    \end{equation*}
    Similarly, if $k>0$:
    \begin{equation*}
        \alpha=\mu\Bigg(\bigcup_{i=1}^{2^k}I_i^k\Bigg)=\sum_{i=1}^{2^k}\mu(I_i^k)=2^k\mu(I_j^k).
    \end{equation*}
    Hence, $\mu(I_j^k)=\alpha2^{-k}=\alpha \lambda(I_j^k)$ for all $j$, $k\in\Z$. As the dyadic intervals generate the Borel $\sigma$-algebra, this suffices to prove the result. 
\end{proof}
We also have the uniqueness of the counting measure as a minimizer in $\Z^2$, so it is reasonable to expect that the same argument of \ref{prop:Rminimizers} yields the uniqueness of the Lebesgue measure as a minimizer in $\R^2$. However, one should keep in mind that in $\R^2$ there are different (equivalent) norms. The $\ell_1$-norm, given by $\|(x,y)\|_1=|x|+|y|$ is the one which has graph-norm like discretizations. Hence, it will be for this norm, and for $\|(x,y)\|_\infty=\max\{|x|,|y|\}$ by rotation, that one can infer the uniqueness of the Lebesgue measure as a minimizer from that of the counting measure in the discrete case as follows.

\begin{prop}\label{prop:discretR2}
    $\DM(\R^2,\|\cdot\|_1)=\DM(\R^2,\|\cdot\|_\infty)=\lbrace\alpha\lambda\rbrace_{\alpha>0}.$
\end{prop}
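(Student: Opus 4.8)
The plan is to follow the strategy behind Theorem \ref{teor:Rd}: show that every minimizer is absolutely continuous with respect to $\lambda$, that its density is positive and superharmonic, and then invoke the Liouville theorem for superharmonic functions in the plane. The only point not already present for $\|\cdot\|_2$ is that monotonicity of averages over $\|\cdot\|_1$- or $\|\cdot\|_\infty$-balls still forces \emph{classical} superharmonicity of the density, and this is exactly where the symmetry (``flat boundaries'') of these particular unit balls enters. I would run the two cases $p\in\{1,\infty\}$ simultaneously, since the argument only uses facts that hold for both; alternatively one can reduce $\|\cdot\|_\infty$ to $\|\cdot\|_1$ through the isometry $(x,y)\mapsto(x+y,x-y)$ of $(\R^2,\|\cdot\|_1)$ onto $(\R^2,\|\cdot\|_\infty)$ (which pushes $\lambda$ to $\tfrac12\lambda$) together with the invariance of $\DM$ under isometries.

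First, $C_{(\R^2,\|\cdot\|_p)}=2^2$ and $\lambda$ is a $2$-measure on $(\R^2,\|\cdot\|_p)$, so Proposition \ref{minimizersabsolcont} gives that every $\mu\in\DM(\R^2,\|\cdot\|_p)$ is mutually absolutely continuous with $\lambda$; write $\mu=f\,d\lambda$ with $f\in L^1_{\mathrm{loc}}$ and $f>0$ a.e. Since $C_\mu=2^2$ and $|B(x,2r)|=2^2|B(x,r)|$, dividing $\mu(B(x,2r))\le 2^2\mu(B(x,r))$ by $|B(x,2r)|$ gives $\fint_{B(x,2r)}f\,d\lambda\le\fint_{B(x,r)}f\,d\lambda$ for all $x$ and $r>0$, i.e. $f$ satisfies \eqref{eq:superharmonic} with $\mu=\lambda$.

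Next I would regularize in order to turn this averaged inequality into a pointwise one. For a standard mollifier $\phi_\varepsilon$, the function $f_\varepsilon=f*\phi_\varepsilon$ is smooth and strictly positive, and by Fubini together with translation invariance of $\lambda$ it again satisfies $\fint_{B(x,2r)}f_\varepsilon\le\fint_{B(x,r)}f_\varepsilon$ for all $x,r$. Taylor-expanding the average of $f_\varepsilon$ over a $\|\cdot\|_p$-ball and using that the unit ball is invariant under $y_i\mapsto -y_i$ and under $y_1\leftrightarrow y_2$ (so that $\fint y_1y_2\,dy=0$ and $\fint y_1^2\,dy=\fint y_2^2\,dy=:\sigma_p^2>0$), one obtains
\[
\fint_{B(x,2r)}f_\varepsilon-\fint_{B(x,r)}f_\varepsilon=\tfrac{3}{2}\,\sigma_p^2\,r^2\,\Delta f_\varepsilon(x)+O(r^4),
\]
so letting $r\to0^+$ forces $\Delta f_\varepsilon\le 0$ on all of $\R^2$. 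Thus $f_\varepsilon$ is a positive superharmonic function in the plane, hence constant; letting $\varepsilon\to0$ (so $f_\varepsilon\to f$ in $L^1_{\mathrm{loc}}$ while each $f_\varepsilon$ is a constant converging to $\fint_{B(0,1)}f=:\alpha$) gives $f\equiv\alpha$ and therefore $\mu=\alpha\lambda$. Combined with $\lambda\in\DM(\R^2,\|\cdot\|_p)$, this is the statement.

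In this approach the only subtle step is the cancellation of the first-order Taylor term, which is precisely what the flat, symmetric boundaries of the $\|\cdot\|_1$- and $\|\cdot\|_\infty$-balls provide. An alternative, closer to the narrative above and to Proposition \ref{prop:Rminimizers}, would be to discretize $\mu$ over the dyadic squares $Q_j^k$, establish a two-dimensional analogue of \eqref{eq:triplante}, conclude that the induced measures $\mu_k$ on $\Z^2$ satisfy $C_{\mu_k}=C_{\Z^2}=5$, invoke Corollary \ref{cor:minsZZ2}, and let $k\to\infty$ as at the end of the proof of Proposition \ref{prop:Rminimizers}. There I expect the main obstacle to be quantitative: the straightforward overlap count only yields $\mu(B(x,3r))\le 13\,\mu(B(x,r))$, and the ``staircase'' discrepancy between continuous $\|\cdot\|_1$-balls and $\Z^2$-balls, while negligible for large radii, is too coarse at radius $\sim 1$, so reaching the sharp discrete constant $5$ appears to require the very rigidity that the density argument supplies at once.
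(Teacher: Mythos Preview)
Your argument is correct and takes a genuinely different route from the paper. The paper proves the proposition by discretization: for each scale $k$ it sets $\mu_k(\{(m,n)\})=\mu(Q^k_{(m,n)})$, where $Q^k_{(m,n)}$ is the open $\|\cdot\|_1$-ball of radius $2^{-k}$ centered at $2^{-k}(m,n)$, and exploits the elementary a.e.\ identity
\[
\mathbf{1}_{Q^k_{(m,n)}}+\mathbf{1}_{Q^k_{(m+1,n)}}+\mathbf{1}_{Q^k_{(m-1,n)}}+\mathbf{1}_{Q^k_{(m,n+1)}}+\mathbf{1}_{Q^k_{(m,n-1)}}
=\mathbf{1}_{B(2^{-k}(m,n),\,2^{-k+1})}+\mathbf{1}_{Q^k_{(m,n)}}.
\]
Integrating against $\mu$ and using $C_\mu=4$ shows directly that $f_{\mu_k}$ is superharmonic on $\Z^2$; recurrence of the simple random walk (Corollary~\ref{cor:minsZZ2}) then forces each $\mu_k$ to be a multiple of counting measure, and one concludes as at the end of Proposition~\ref{prop:Rminimizers}. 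Your route instead mollifies the density, Taylor-expands using the reflection and coordinate-swap symmetries of the $\ell^1$/$\ell^\infty$ unit ball to deduce $\Delta f_\varepsilon\le 0$, and appeals to the continuous Liouville theorem in the plane. This is precisely the content of the paper's final Lemma (stated there only for $C^2$ densities) upgraded by your mollification step to general $L^1_{\mathrm{loc}}$ densities; the effect is to handle $p\in\{1,2,\infty\}$ in $\R^2$ by a single argument, whereas the paper's discretization showcases the link with Section~\ref{sec:sec2}. As for the obstacles you anticipate with the discrete route: the paper sidesteps them entirely by checking only the \emph{local} ratio, where the tiling identity above delivers the sharp constant $5$ with no staircase loss.
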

\begin{proof}
    Let $\mu$ be a doubling measure in $(\R^2,\|\cdot\|_1)$ such that $C_\mu=4$. For each $k\in\Z$ and $(x_0,y_0)\in \Z^2$ we define the dyadic $\|\cdot\|_1$-ball
    \begin{equation*}
        Q_{(x_0,y_0)}^k=\{(x,y)\in\R^2: |x-2^{-k}x_0|+|y-2^{-k}y_0|<2^{-k}\}.
    \end{equation*}
    For each $k\in\Z$, we consider the discretization $\mu_k$ of $\mu$ in $\Z^2$ given by
    \begin{equation*}
        \mu_k(\{(x,y)\})=\mu(Q_{(x,y)}^k).
    \end{equation*}
     We will show that these discretizations are multiples of the counting measure. Recall that a doubling measure in $\R^d$ charges no measure on hypersurfaces (see, for instance, \cite[I.8.6]{Stein1993}). In particular, $\mu\left(\partial Q_{(x_0,y_0)}^k\right)=0$, and so it makes no difference to consider open balls instead of half-open balls forming a partition of $\R^2$. For $k\in\Z$ and $(x,y)\in\Z^2$ it holds that
     \begin{align*}
         \mu_k(\{(x,y)\})+\mu_k(\{(x+1,y)\})&+\mu_k(\{(x-1,y)\})+\mu_k(\{(x,y+1)\})+\mu_k(\{(x,y-1)\})\\
         &=\mu(B((2^{-k}x,2^{-k}y),2^{-k+1}))+\mu(B((2^{-k}x,2^{-k}y),2^{-k}))\\ 
         &\leq5\mu(B((2^{-k}x,2^{-k}y),2^{-k}))=5\mu_k(\{(x,y)\}).
     \end{align*}
     Hence, by Proposition \ref{minimizersabsolcont}, for each $k\in\Z$ $\mu_k$ is a multiple of the counting measure. Set $\alpha=\mu_0(\{(0,0)\})$. Observe that, for $k\leq 0$, there are $2^{-2k}$ disjoint sets of the type $Q^0_{(x,y)}$ covering $Q^k_{(0,0)}$ (up to sets of $\mu$-measure $0$). Then, as $\mu_0$ assigns the same measure to all points, we have
    \begin{equation*}
        \mu_k(\{(0,0)\})=\alpha2^{-2k}=\alpha \lambda\left(Q^k_{(0,0)}\right).
    \end{equation*}
    Similarly, for $k>0$, there are $2^{2k}$ sets of the type $Q^k_{(x,y)}$ covering $Q^0_{(0,0)}$. Considering $\mu_k$ assigns the same measure to all points, we also have
    \begin{equation*}
        \mu_k(\{(0,0)\})=\alpha2^{-2k}=\alpha \lambda\left(Q^k_{(0,0)}\right).
    \end{equation*}
    As the dyadic $\|\cdot\|_1$-balls generate the Borel $\sigma$-algebra, the result follows for $(\R^2,\|\cdot\|_1)$. As $(\R^2, \|\cdot\|_\infty)$ is isometric to $(\R^2, \|\cdot\|_1)$ by a rotation, the same conclusion holds in that case.
\end{proof}
To study the set of minimizers in $(\R^d,\|\cdot\|_2)$, a direct translation from the discrete case is no longer possible if $d>1$. However, the connection with positive superharmonic functions will prove to be fruitful again. First recall that, by Proposition \ref{minimizersabsolcont}, all minimizers in $\R^d$ are absolutely continuous with respect to the Lebesgue measure. The problem is then reduced to studying which positive measurable functions $u$ yield a doubling minimizer $d\mu=ud\lambda$. Given $u:\R^d\rightarrow \R_+$ and $d\mu=ud\lambda$, $C_\mu=2^d$ if and only if 
\begin{equation}\label{eq:decreasingAv}
    \fint_{B(x,r)}u\;d\lambda=\frac{1}{\lambda(B(x,r))}\int_{B(x,r)}u\;d\lambda\geq\frac{1}{\lambda(B(x,2r))}\int_{B(x,2r)}u\;d\lambda=\fint_{B(x,2r)}u\;d\lambda
\end{equation}
for every $x\in\R^d$ and $r>0$. That is to say, a measure is a doubling minimizer if and only if its density with respect to the Lebesgue measure satisfies the superharmonic mean value property \eqref{eq:decreasingAv}. The necessity of this condition is particular to this case because the Lebesgue measure reaches its doubling constant in every ball. As we shall see, the only positive functions satisfying this property when $d=2$ are constants, whereas there is a large family of examples for $d\geq 3$. Our argument for $d=2$ is based on a comparison with the fundamental solution to the Laplacian, $\log\|x\|$, borrowed from the classical partial differential equations theory. However, some tweaks must be introduced, as we cannot assume any regularity in our functions \textit{a priori}.
\begin{theorem}\label{teor:minimizadoresR2}
    \begin{equation*}
        \DM(\R^2,\|\cdot\|_2)=\lbrace\alpha\lambda\rbrace_{\alpha>0}.
    \end{equation*}
\end{theorem}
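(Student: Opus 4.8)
The plan is to reduce the statement to the classical Liouville theorem for positive superharmonic functions on $\R^2$, using a mollification step to bypass the absence of regularity. Let $\mu\in\DM(\R^2,\|\cdot\|_2)$, so that $C_\mu=2^2=4$. By Proposition \ref{minimizersabsolcont} (applied with $m=\lambda$ and $d=2$, since $C_{\R^2}=2^2$), the measures $\mu$ and $\lambda$ are mutually absolutely continuous, so $d\mu=u\,d\lambda$ for some $u\in L^1_{\mathrm{loc}}(\R^2)$ with $u>0$ a.e. Because $C_\mu=4$, the density $u$ satisfies \eqref{eq:decreasingAv}; that is, writing $A_ru(x):=\fint_{B(x,r)}u\,d\lambda$, the map $r\mapsto A_ru(x)$ is non-increasing on $(0,\infty)$ for every $x\in\R^2$. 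The goal is to show $u$ is a.e. constant.

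\emph{Step 1: mollification.} Fix a radial mollifier $\phi_\varepsilon\geq0$ supported in $B(0,\varepsilon)$ with $\int\phi_\varepsilon\,d\lambda=1$, and set $u_\varepsilon=u*\phi_\varepsilon$. Then $u_\varepsilon$ is smooth, and since $\mu$ is doubling we have $\mu(B(x,\varepsilon))>0$ for every $x$, hence $u_\varepsilon>0$ on all of $\R^2$. Ball averaging is itself a convolution, $A_rf=f*\psi_r$ with $\psi_r=\mathbf 1_{B(0,r)}/\lambda(B(0,r))$, so convolutions commute and $A_ru_\varepsilon=(A_ru)*\phi_\varepsilon$. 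As $r\mapsto A_ru$ is non-increasing pointwise and $\phi_\varepsilon\geq0$, the map $r\mapsto A_ru_\varepsilon(x)$ is non-increasing for every $x$. For a smooth function this monotonicity forces $\Delta u_\varepsilon\leq0$: expanding $A_ru_\varepsilon(x)-u_\varepsilon(x)=c\,r^2\Delta u_\varepsilon(x)+O(r^4)$ with $c>0$, dividing by $r^2$, and letting $r\to0$. Thus each $u_\varepsilon$ is a positive classical superharmonic function on $\R^2$.

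\emph{Step 2: Liouville via the logarithmic kernel.} I would then prove that a superharmonic $v\geq0$ on $\R^2$ must be constant. Fix $p\in\R^2$ and $0<\rho<R$, and let $m_\rho=\min_{|x-p|=\rho}v$. On the annulus $\{\rho<|x-p|<R\}$ compare $v$ with the harmonic function
$$
h_R(x)=m_\rho\,\frac{\log R-\log|x-p|}{\log R-\log\rho},
$$
which equals $m_\rho$ on the inner circle and $0$ on the outer circle; since $v\geq h_R$ on the boundary of the annulus and $v$ is superharmonic, the minimum principle gives $v\geq h_R$ throughout. Fixing $x$ and letting $R\to\infty$ yields $v(x)\geq m_\rho$ for every $x$ with $|x-p|>\rho$. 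Letting $\rho\to0$ and using lower semicontinuity of $v$ together with the super-mean-value inequality (which give $m_\rho\to v(p)$), we obtain $v(q)\geq v(p)$ for every $q$; by symmetry $v$ is constant. Applying this to each $u_\varepsilon$ gives $u_\varepsilon\equiv c_\varepsilon$. Since $u_\varepsilon\to u$ in $L^1_{\mathrm{loc}}(\R^2)$ as $\varepsilon\to0$, the limit $u$ equals a positive constant $c$ a.e., so $\mu=c\lambda$. This proves $\DM(\R^2,\|\cdot\|_2)\subseteq\lbrace\alpha\lambda\rbrace_{\alpha>0}$, and the reverse inclusion is immediate since $C_\lambda=4=C_{\R^2}$.

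\emph{Main obstacle.} The only genuine difficulty is the \emph{a priori} lack of regularity of $u$: one cannot speak of $\Delta u$ directly, nor invoke the superharmonic Liouville theorem as a black box. Mollification settles this cleanly once one verifies the two facts used above—that convolution with a radial bump commutes with ball averaging, so monotonicity \eqref{eq:decreasingAv} is inherited, and that positivity is preserved. An alternative route, closer to the classical PDE comparison with $\log\|x\|$, is to work directly with the lower-semicontinuous envelope $\bar u(x):=\lim_{r\to0^+}A_ru(x)$ (which exists by monotonicity and equals $u$ a.e. by Lebesgue differentiation), check that $\bar u$ is superharmonic in the classical sense, and then run the annulus comparison on $\bar u$; the bookkeeping is heavier but avoids mollification altogether.
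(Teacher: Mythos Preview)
Your argument is correct and reaches the same conclusion as the paper, but the route differs. The paper does not mollify: it first uses Lebesgue differentiation to show that $u(x)\geq A_ru(x)$ holds almost everywhere, then replaces $u$ by the everywhere-defined representative $x\mapsto A_Ru(x)$ for a fixed $R$, and finally runs a direct comparison with $f_\varepsilon(x)=-\varepsilon\log\|x\|$ via the super-mean-value inequality (rather than the classical minimum principle on annuli), concluding with an ad-hoc contradiction to show $u$ equals its infimum on $\partial B(0,1)$. Your mollification step is cleaner and more modular: once $u_\varepsilon$ is smooth and satisfies $\Delta u_\varepsilon\leq0$, you can invoke the classical Liouville theorem for nonnegative superharmonic functions as a black box (and your annulus comparison with $h_R$ is a tidy proof of it). The paper's approach, on the other hand, avoids approximation entirely and stays closer to the measure-theoretic setting; this is essentially the ``alternative route'' you sketch in your last paragraph. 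One small imprecision: \eqref{eq:decreasingAv} only gives $A_ru\geq A_{2r}u$, not full monotonicity in $r$; but what you actually use is $A_ru_\varepsilon(x)\leq u_\varepsilon(x)$, and that follows by iterating $A_{r/2^k}u_\varepsilon\geq A_ru_\varepsilon$ and letting $k\to\infty$ using continuity of $u_\varepsilon$ (or, equivalently, directly from the expansion $A_ru_\varepsilon-A_{2r}u_\varepsilon=-3cr^2\Delta u_\varepsilon+O(r^4)\geq0$).
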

\begin{proof}
    Let $\mu$ be a doubling measure in $\R^2$ such that $C_\mu=4$. Then there exists a measurable function $u:\R^d\rightarrow \R_+$ satisfying \eqref{eq:decreasingAv} and such that $d\mu=ud\lambda$. We shall prove that $u$ is constant $\lambda$-almost everywhere.\par
    From \eqref{eq:decreasingAv} we may infer that the classical mean value property for superharmonic functions holds $\lambda$-almost everywhere, i.e.,
        \begin{equation}\label{eq:meanvalue}
        u(x)\geq Mu(x):=\sup_{r>0}\fint_{B(x,r)}u\;d\lambda\quad\quad  \text{for }\lambda-\text{almost every }x.
    \end{equation}
    
   Indeed, suppose that the set
   $$A=\left\{x\in\R^2:\exists r_x>0,\text{ such that } \fint_{B(x,r_x)}u\;d\lambda>u(x)\right\}$$
   satisfies $\lambda(A)>0$ (note $A$ is measurable because so is the function $Mu$, by the properties of the centered maximal function).
  For every $x\in A$ there exist $r_x>0$ such that
    \begin{equation*}
       u(x)< \fint_{B(x,r_x)}u\;d\lambda.
    \end{equation*}
    Then, by \eqref{eq:decreasingAv} it follows that
    \begin{equation*}
        u(x)< \fint_{B(x,r_x)}u\;d\lambda\leq \fint_{B(x,\frac{r_x}{2^n})}u\;d\lambda
    \end{equation*}
    for all $n\in\N$. Now, by Lebesgue differentiation theorem, there is $S\subset \R^2$ with $\lambda(S)=0$ such that for $x\notin S$ 
    $$
    \fint_{B(x,\frac{r_x}{2^n})}u\;d\lambda\underset{n\rightarrow\infty}\longrightarrow u(x).
    $$
    Consequently, for $x\in A\backslash S$ we would have that
    $$
    u(x)<\fint_{B(x,r_x)}u\;d\lambda\leq u(x),
    $$
    which is a contradiction as $\lambda(A\backslash S)>0$. Hence, $u$ satisfies \eqref{eq:meanvalue} almost everywhere. Moreover, fixed $R>0$, we may modify $u$ on a set of Lebesgue measure $0$ so that it satisfies
    \begin{equation}\label{eq:mediaeverywhere}
        u(x)\geq\fint_{B(x,2^nR)}u\;d\lambda
    \end{equation}
    for all $x\in\R^2$ and $n\in\N$. For this purpose, simply set
    \begin{equation*}
        u(x):=\fint_{B(x,R)}ud\lambda.
    \end{equation*}
    From now on, we will assume we are using such a version of $u$ (defined in all of $\R^2$ instead of only $\lambda$-almost everywhere), which is still a density for $\mu$ with respect to the Lebesgue measure. Now, we define
    \begin{equation*}
        \alpha=\inf_{x\in \partial B(0,1)}u(x)\geq 0.
    \end{equation*}
    We will denote $\Tilde{u}=u-\alpha$ and $f(x)=-\log(\|x\|)$, the fundamental solution of the Laplacian in $\R^2$. $f$ is harmonic, so
    \begin{equation}\label{eq:valormedio}
        f(x)=\fint_{B(x,r)}f\;d\lambda
    \end{equation}
    for all $x\in\R^2\setminus\{0\}$, $r>0$. Now, fix $\varepsilon>0$ and denote $f_\varepsilon(x)=\varepsilon f(x)$. Of course, $f_\varepsilon$ also satisfies the mean value property \eqref{eq:valormedio}. As
    \begin{equation*}
        \lim_{\|x\|\to\infty}f_\varepsilon(x)=-\infty,
    \end{equation*}
    and $\Tilde{u}\geq-\alpha$ everywere since $u$ is non-negative, there is an $M=M(\varepsilon)>0$ such that $\Tilde{u}(x)\geq f_\varepsilon(x)$ when $\|x\|\geq M$. Next,
    \begin{equation}\label{eq:MVPgepsilon}
        g_\varepsilon(x):=\Tilde{u}(x)-f_\varepsilon(x)\geq \fint_{B(x,r)}\Tilde{u}\;d\lambda-\fint_{B(x,r)}f_\varepsilon\;d\lambda,
    \end{equation}
    for all $r>0$ and all $x\neq0$. Let us see $g_\varepsilon\geq 0$ almost everywhere. Indeed, we have that
    \begin{equation*}
        g_\varepsilon(x)=\Tilde{u}(x)+\varepsilon\log(\|x\|)\geq-\alpha+\varepsilon\log(\|x\|)\xrightarrow{\: \|x\| \to \infty \: }=\infty,
    \end{equation*}
    so if $g_\varepsilon(x)<0$, for $R$ sufficiently large it holds that 
    \begin{equation*}
        g_\varepsilon(x)<0<\fint_{B(x,R)}g_\varepsilon\;d\lambda,
    \end{equation*}
    which contradicts \eqref{eq:MVPgepsilon}. Therefore, $g_\varepsilon \geq 0$ almost everywhere. Making $\varepsilon$ tend to $0$ we get that
    \begin{equation*}
        \Tilde{u}(x)\geq f_\varepsilon(x)\xrightarrow{\: \varepsilon \to 0 \: }0
    \end{equation*}
    almost everywhere and, hence, $u\geq \alpha$ almost everywhere. We claim that $u=\alpha$ almost everywhere. Indeed, assuming otherwise, there exists $D\subset\R^2$ with $\lambda(D)>0$ and such that $u(x)>\alpha$ for all $x\in D$. Moreover, there must be a ball $B(0, R')$ such that $m(D\cap B(0, R'))>0$, and we may pick $R'>R-2$. We have that
    \begin{equation*}
        \fint_{B(0,R')}u\;d\lambda=\beta+\alpha
    \end{equation*}
    for some $\beta>0$. Let $x\in\partial B(0,1)$. As $B(0,R')\subset B(x,R'+2)$, it holds that
    \begin{equation*}
        \fint_{B(x,R'+2)}u\; d\lambda\geq\frac{\lambda(B(0,R'))}{\lambda(B(x,R'+2))}(\alpha+\beta)+\frac{\lambda(B(x,R'+2)\setminus B(0,R'))}{\lambda(B(x,R'+2))}\alpha.
    \end{equation*}
    The above expression does not depend on $x$. However, by definition of $\alpha$, for all $\delta>0$ there is a point $x_\delta\in\partial B(0,1)$ such that  $0<u(x_\delta)-\alpha<\delta$. Picking $\delta$ sufficiently small we have that
    \begin{equation*}
        u(x_\delta)< \fint_{B(x_\delta,R'+2)}u\; d\lambda.
    \end{equation*}
    But this is a contradiction with \eqref{eq:mediaeverywhere}. Hence, $u=\alpha$ almost everywhere and so $\mu=\alpha \lambda$.
\end{proof}
To finish the proof of Theorem \ref{teor:Rd}, we only need to show the abundance of minimizers in higher dimensions. In contrast with Proposition \ref{prop:Rminimizers} and Theorem \ref{teor:minimizadoresR2}, we will exhibit an uncountable family of doubling minimizers distinct from the Lebesgue measure in $(\R^d,\|\cdot\|_2)$ when $d\geq 3$. This is a direct consequence of the existence of positive, nonconstant, superharmonic functions. Indeed, fixed $d\geq 3$, the function
\begin{equation*}
    f_k(x)=\begin{cases}
    k^{d-2}, &\textrm{if } x\in B(0,k),\\
    \|x\|^{d-2}, & \textrm{otherwise,}
\end{cases}
\end{equation*}
is superharmonic and continuous in $(\R^d,\|\cdot\|_2)$ for every $k>0$. In particular, this implies that
\begin{equation*}
    \fint_{B(x,2r)}f_kd\lambda\leq\fint_{B(x,r)}f_kd\lambda
\end{equation*}
for all $x\in\R^d$ and $r>0$. Define $d\mu_k=f_kd\lambda$ and let $x\in\R^d$, $r>0$.
For all $x\in\R^d$ and $r>0$. We have
\begin{align*}
    \mu_k(B(x,2r))&=\int_{B(x,2r)}f_k d\lambda=\lambda(B(x,2r))\frac{\int_{B(x,2r)}f_k d\lambda}{\lambda(B(x,2r))}\\&\leq2^d\lambda(B(x,r))\frac{\int_{B(x,r)}f_k d\lambda}{\lambda(B(x,r))}=2^d \mu_k(B(x,r)).
\end{align*}
Hence, $\mu_k\in\DM(\R^d,\|\cdot\|_2)$ for every $k>0$. Note that the doubling constant is preserved by isometries, so $f_k(\cdot-y)d\lambda$ is a doubling minimizer for every $y\in\R^d$. Moreover, any linear combination of doubling minimizers yields a doubling minimizer as well.\par
The questions of whether there are doubling minimizers other than the Lebesgue measure in $(\R^d,\|\cdot\|_p)$ for $p\neq 2$ and $d\geq 3$, and whether the Lebesgue measure is the only minimizer when $d=2$ and $1<p<2$ or $2<p<\infty$ remain open. There are reasons to believe that the latter statement is true: if there is a minimizer $\mu$ in $(\R^2,\|\cdot\|_p)$ different from the Lebesgue measure, it would have to be of the form $d\mu=fd\lambda$, where $f\not\in C^2$. This is a consequence of the following lemma.
\begin{lemma}
    Let $1\leq p\leq\infty$. Let $u\in C^2(\R^d)$ be a superharmonic function in $(\R^d,\|\cdot\|_p)$ with respect to $\lambda$. Then $\Delta u\leq 0$ and $u$ is superharmonic in $(\R^d,\|\cdot\|_2)$ with respect to $\lambda$.
\end{lemma}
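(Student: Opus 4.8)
The plan is to reduce the statement to a second-order Taylor expansion of ball averages together with a classical monotonicity fact. Fix $x\in\R^d$ and, for $r>0$, write $B_p(x,r)=x+rB_p(0,1)$ so that
$$
\fint_{B_p(x,r)}u\,d\lambda=\fint_{B_p(0,1)}u(x+ry)\,dy.
$$
Since $u\in C^2$, Taylor's theorem and the (uniform) continuity of $D^2u$ on a compact neighbourhood of $x$ give $u(x+ry)=u(x)+r\,\nabla u(x)\cdot y+\tfrac{r^2}{2}\,y^{\mathsf T}D^2u(x)\,y+r^2R_r(y)$ with $\sup_{y\in B_p(0,1)}|R_r(y)|\to0$ as $r\to0$. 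Now exploit the symmetries of $B_p(0,1)$: it is invariant under every coordinate sign flip and under permutations of the coordinates. The sign flips kill the linear term and all off-diagonal second-order terms upon averaging, and the permutations force $\fint_{B_p(0,1)}y_i^2\,dy$ to equal a common constant $c_p:=\fint_{B_p(0,1)}y_1^2\,dy>0$ for every $i$. Hence
$$
\fint_{B_p(x,r)}u\,d\lambda=u(x)+\frac{c_p}{2}\,r^2\,\Delta u(x)+o(r^2)\qquad(r\to0).
$$

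Next I would feed this into the hypothesis. By the definition preceding \eqref{eq:superharmonic}, $u$ being superharmonic in $(\R^d,\|\cdot\|_p)$ with respect to $\lambda$ means exactly $\fint_{B_p(x,2r)}u\,d\lambda\le\fint_{B_p(x,r)}u\,d\lambda$ for all $r>0$. Substituting the expansion for the radii $2r$ and $r$ and cancelling $u(x)$ gives $2c_p r^2\Delta u(x)+o(r^2)\le\tfrac{c_p}{2}r^2\Delta u(x)+o(r^2)$, that is $\tfrac{3}{2}c_p\Delta u(x)\le o(1)$ after dividing by $r^2$. Letting $r\to0$ and using $c_p>0$ yields $\Delta u(x)\le0$; since $x$ was arbitrary, $\Delta u\le0$ on $\R^d$.

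Finally I would deduce Euclidean superharmonicity from $\Delta u\le 0$ via the classical monotonicity of averages: for the Euclidean norm, $\phi(r):=\fint_{\partial B_2(x,r)}u\,dS$ satisfies $\phi'(r)=\tfrac{1}{|\partial B_2(x,r)|}\int_{B_2(x,r)}\Delta u\,d\lambda\le0$ by the divergence theorem, so $\phi$ is non-increasing; the solid average $\psi(r):=\fint_{B_2(x,r)}u\,d\lambda$ is the $s^{d-1}\,ds$-weighted mean of $\phi$ on $[0,r]$, hence also non-increasing (indeed $\psi'(r)=\tfrac{d}{r}(\phi(r)-\psi(r))\le0$). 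In particular $\fint_{B_2(x,2r)}u\,d\lambda\le\fint_{B_2(x,r)}u\,d\lambda$, which is precisely \eqref{eq:superharmonic} for $\mu=\lambda$ and the Euclidean distance, so $u$ is superharmonic in $(\R^d,\|\cdot\|_2)$ with respect to $\lambda$.

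The only delicate point is the uniformity of the Taylor remainder over $y\in B_p(0,1)$, and this is immediate from uniform continuity of $D^2u$ on a compact ball around $x$; so there is no genuine obstacle, the lemma being essentially a symmetry computation followed by a classical fact. It is worth recording in the write-up that the scalar $c_p$ is the same in every direction, which is exactly why, in the $C^2$ category, $p$-superharmonicity collapses onto ordinary superharmonicity independently of $p$.
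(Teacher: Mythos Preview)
Your proof is correct and follows essentially the same route as the paper: a second-order Taylor expansion of the $p$-ball average, cancellation of odd and off-diagonal terms via the sign-flip and coordinate-permutation symmetries of $B_p(0,1)$, and then the classical implication $\Delta u\le 0\Rightarrow$ Euclidean superharmonicity. The only cosmetic differences are that the paper compares the ball average directly with the point value $u(x)$ (legitimate for continuous $u$ by iterating the doubling inequality) rather than subtracting the expansions at radii $r$ and $2r$, and it leaves the divergence-theorem monotonicity step you spell out as ``well known''.
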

\begin{proof}
    We will carry out the proof for $d=2$. The same argument works in higher dimensions. Let $u$ be as in the statement. Its Taylor expansion around $(0,0)$ is
    \begin{align*}
        u(x,y) &= u(0,0) + \frac{\partial u}{\partial x}(0,0)x + \frac{\partial u}{\partial y}(0,0)y \\
        &\quad + \frac{1}{2}\frac{\partial^2 u}{\partial x^2}(0,0)x^2 + \frac{1}{2}\frac{\partial^2 u}{\partial y^2}(0,0)y^2 + \frac{\partial^2 u}{\partial x\partial y}(0,0)xy + o(\|(x,y)\|_2^2) \\
        &=: u(0,0) + u_x x + u_y y + \frac{1}{2}u_{xx}x^2 + \frac{1}{2}u_{yy}y^2 + u_{xy}xy + o(\|(x,y)\|_2^2).
    \end{align*}
    Using the symmetries of the balls $B_p(0,\varepsilon)$, $(x,y)\in B_p(0,\varepsilon)$ if and only if $(\sigma_1 x,\sigma_2 y)\in B_p(0,\varepsilon)$ for every $\sigma_1,\sigma_2\in\{-1,1\}$ and $(y,x)\in B_p(0,\varepsilon)$, we get
    \begin{align*}
        \fint_{B_p(0,\varepsilon)}u 
        &= u(0,0) + u_x \fint_{B_p(0,\varepsilon)}x + u_y \fint_{B_p(0,\varepsilon)}y + \frac{1}{2}u_{xx} \fint_{B_p(0,\varepsilon)}x^2 \\
        &\quad + \frac{1}{2}u_{yy} \fint_{B_p(0,\varepsilon)}y^2 + u_{xy} \fint_{B_p(0,\varepsilon)}xy + o(\varepsilon^2) \\
        &= u(0,0) + \frac{1}{2}u_{xx} \fint_{B_p(0,\varepsilon)}x^2 + \frac{1}{2}u_{yy} \fint_{B_p(0,\varepsilon)}y^2 + o(\varepsilon^2) \\
        &= u(0,0) + \frac{1}{2}\Delta u(0,0) \fint_{B_p(0,\varepsilon)}x^2 + o(\varepsilon^2),
    \end{align*}
    where all integrals are with respect to Lebesgue measure. Note that
    \begin{equation*}
        \fint_{B_p(0,\varepsilon)}x^2\,d\lambda \gtrsim \fint_{B_p(0,\varepsilon/2)}x^2\,d\lambda + \fint_{B_p(0,\varepsilon)\setminus B_p(0,\varepsilon/2)}x^2\,d\lambda \geq \fint_{B_p(0,\varepsilon)\setminus B_p(0,\varepsilon/2)}x^2\,d\lambda \geq \frac{\varepsilon^2}{4}.
    \end{equation*}
    Then, by the mean value property for superharmonic functions,
    \begin{equation*}
        \Delta u(0,0) \leq 2 \cdot \frac{o(\varepsilon^2)}{\fint_{B_p(0,\varepsilon)}x^2} \lesssim 2 \cdot \frac{o(\varepsilon^2)}{\varepsilon^2} \xrightarrow{\varepsilon\to 0} 0.
    \end{equation*}
    We have only used the mean value property and the symmetries of the balls, so the same argument shows that $\Delta u(x,y)\leq 0$ for all $(x,y)\in\mathbb{R}^2$. It is well known that, in this case, $u$ is superharmonic in $(\mathbb{R}^2, \|\cdot\|_2)$ with respect to $\lambda$.
\end{proof}

\section*{Acknowledgments}
We are grateful to Eugenia Malinnikova for interesting discussions and in particular for sharing the argument leading to the proof of Theorem \ref{teor:graphs}.

\bibliographystyle{alpha}
\bibliography{sources}

\begin{thebibliography}{DCST25}

\bibitem[AB56]{AB1956}
Lars~Valerian Ahlfors and Arne Beurling.
\newblock The boundary correspondence under quasiconformal mappings.
\newblock {\em Acta Math.}, 96:125--142, 1956.

\bibitem[AGG19]{AGG2019}
Tomasz Adamowicz, Micha\l{} Gaczkowski, and Przemys{\l}aw G\'orka.
\newblock Harmonic functions on metric measure spaces.
\newblock {\em Rev. Mat. Complut.}, 32(1):141--186, 2019.

\bibitem[BB06]{BB2006}
Alexander Brudnyi and Yuri Brudnyi.
\newblock Extension of {L}ipschitz functions defined on metric subspaces of
  homogeneous type.
\newblock {\em Rev. Mat. Complut.}, 19(2):347--359, 2006.

\bibitem[BB11]{BB2011}
Anders Bj\"orn and Jana Bj\"orn.
\newblock {\em Nonlinear potential theory on metric spaces}, volume~17 of {\em
  EMS Tracts in Mathematics}.
\newblock European Mathematical Society (EMS), Z\"urich, 2011.

\bibitem[Chr90]{Ch1990}
Michael Christ.
\newblock A {$T(b)$} theorem with remarks on analytic capacity and the {C}auchy
  integral.
\newblock {\em Colloq. Math.}, 60/61(2):601--628, 1990.

\bibitem[CW71]{CW1971}
Ronald~R. Coifman and Guido Weiss.
\newblock {\em Analyse harmonique non-commutative sur certains espaces
  homog\`enes}, volume Vol. 242 of {\em Lecture Notes in Mathematics}.
\newblock Springer-Verlag, Berlin-New York, 1971.
\newblock \'Etude de certaines int\'egrales singuli\`eres.

\bibitem[DCST23]{DST2023}
Estibalitz Durand-Cartagena, Javier Soria, and Pedro Tradacete.
\newblock Doubling constants and spectral theory on graphs.
\newblock {\em Discrete Math.}, 346(6):Paper No. 113354, 17, 2023.

\bibitem[DCST25]{DST2025}
Estibalitz Durand-Cartagena, Javier Soria, and Pedro Tradacete.
\newblock The least doubling constant of a path graph.
\newblock {\em Kyoto J. Math.}, 65(1):217--243, 2025.

\bibitem[Gar10]{GKS2010}
John Garnett.
\newblock A doubling measure on $\mathbb{R}^d$ can charge a rectifiable curve.
\newblock {\em Proc. Am. Math. Soc.}, 138(5):1673--1679, 2010.

\bibitem[GG09]{GG2009}
Micha\l{} Gaczkowski and Przemys{\l}aw G\'orka.
\newblock Harmonic functions on metric measure spaces: convergence and
  compactness.
\newblock {\em Potential Anal.}, 31(3):203--214, 2009.

\bibitem[Hei01]{Heinonen}
Juha Heinonen.
\newblock {\em Lectures on analysis on metric spaces}.
\newblock Universitext. Springer-Verlag, New York, 2001.

\bibitem[HK12]{HK2012}
Tuomas Hyt\"{o}nen and Anna Kairema.
\newblock Systems of dyadic cubes in a doubling metric space.
\newblock {\em Colloq. Math.}, 126(1):1--33, 2012.

\bibitem[Jon95]{Jonsson1995}
Alf Jonsson.
\newblock Measures satisfying a refined doubling condition and absolute
  continuity.
\newblock {\em Proc. Amer. Math. Soc.}, 123(8):2441--2446, 1995.

\bibitem[LS98]{LS1998}
Jouni Luukkainen and Eero Saksman.
\newblock Every complete doubling metric space carries a doubling measure.
\newblock {\em Ann. Acad. Sci. Fenn. Math..}, 126(2):531--534, 1998.

\bibitem[Lyo83]{L1983}
Terry Lyons.
\newblock A simple criterion for transience of a reversible markov chain.
\newblock {\em Ann. Probab.}, 11(2):393--402, 1983.

\bibitem[Sj{\"o}97]{S1997}
Tord Sj{\"o}din.
\newblock On $s$-sets and mutual absolute continuity of measures on homogeneous
  spaces.
\newblock {\em Manuscripta Math.}, 94:169--186, 1997.

\bibitem[ST19]{ST2019}
Javier Soria and Pedro Tradacete.
\newblock The least doubling constant of a metric measure space.
\newblock {\em Ann. Acad. Sci. Fenn. Math.}, 44(2):1015--1030, 2019.

\bibitem[Ste93]{Stein1993}
Eias~M. Stein.
\newblock {\em Harmonic Analysis: Real-Variable Methods, Orthogonality, and
  Oscillatory Integrals.}
\newblock Princeton University Press, 1993.

\bibitem[VK88]{VK1988}
Alexander Volberg and Sergei Konyagin.
\newblock On measures with the doubling condition.
\newblock {\em Math. USSR-Izv.}, 30:629--638, 1988.

\bibitem[Woe00]{W2000}
Wolfgang Woess.
\newblock {\em Random Walks on Infinite Graphs and Groups}.
\newblock Number 138 in Cambridge Tracts in Mathematics. Cambridge University
  Press, 2000.

\bibitem[Wu98]{Wu1998}
Jang-Mei Wu.
\newblock Hausdorff dimension and doubling measures on metric spaces.
\newblock {\em Proc. Amer. Math. Soc.}, 126(5):1453--1459, 1998.

\end{thebibliography}

\end{document}